%
%
%
%
\documentclass{amsart}
\usepackage{amssymb,amsmath}
\usepackage{hyperref}
\usepackage{appendix}
\usepackage{paralist}
\usepackage{graphics}
\usepackage{epsfig}
\usepackage{graphicx}
\usepackage{epstopdf}
\usepackage[]{algorithm2e}
\usepackage{graphicx,amsxtra,amssymb, tikz}
\usepackage{amsmath}
\usepackage{epsfig}
\usepackage{hyperref}
\usepackage{appendix}
\usepackage{setspace}
\usepackage{MnSymbol,wasysym}
%
%
%
%
%
%
%
\newtheorem{thm}{Theorem}[section]

\newtheorem{lem}{Lemma}[section]


\newtheorem{proposition}[thm]{Proposition}

\numberwithin{equation}{section}
%

%

%


\newcommand{\nset}{{\mathbb N}}


%

%

%

%

%
\newcommand{\ssy}{\scriptscriptstyle}

\newcommand{\half}{\frac{1}{2}}

\newcommand{\dspace}{{\mathfrak X}_h}

\newcommand{\bee}{\begin{equation}}
\newcommand{\eee}{\end{equation}}
%
%
\pagestyle{plain}
\setlength{\textheight}{22.0truecm}
\setlength{\textwidth}{15.0truecm}
\setlength{\oddsidemargin}{0.00truecm}
\setlength{\evensidemargin}{0.00truecm}
%
%
%
\begin{document}
\title[]
{A linear implicit finite difference discretization\\
of the Schr{\"o}dinger-Hirota Equation}
\author[]{Georgios E. Zouraris$^{\ddag}$}
\thanks
{$^{\ddag}$ Department of Mathematics and Applied Mathematics,
University of Crete, PO Box 2208, GR-710 03 Heraklion, Crete, Greece.}
\subjclass{65M12, 65M60}
\keywords {Schr{\"o}dinger-Hirota equation, Hirota equation, linear implicit time stepping,
finite differences, periodic boundary conditions, optimal order error estimates, Bright Solitons}
%
%
%
%
\begin{abstract}
A linear implicit finite difference method is proposed
for the approximation of the solution to a periodic, initial value problem
for a Schr{\"o}dinger-Hirota equation.
Optimal, second order convergence in the discrete $H^1-$norm is proved,
assuming that $\tau$, $h$ and
$\tfrac{\tau^4}{h}$ are sufficiently small,
where $\tau$ is the time-step and $h$ is
the space mesh-size.
The efficiency of the proposed method is verified 
by results from numerical experiments.
\end{abstract}
\maketitle
%
%
%
%
%
%
%
%
\section{Introduction}
\subsection{Formulation of the problem}
For $T>0$ and ${\sf L}>0$, we consider the following periodic initial value problem:
find $\phi=\phi(t,x):[0,T]\times{\mathbb R}\rightarrow{\mathbb C}$ which is ${\sf L}-$periodic
on ${\mathbb R}$ and such that:
\begin{gather}
\phi_t={\rm i}\,\rho\,\phi_{xx}-\sigma\,\phi_{xxx}
-3\,\alpha\,|\phi|^2\,\phi_x +{\rm i}\,\delta\,|\phi|^2\,\phi+f
\quad\text{\rm on}\ \ (0,T]\times{\mathbb R},
\label{HirotaPeriodic}\\
\phi (0,x)=\phi_0(x)\quad\forall\,x\in{\mathbb R},
\label{HirotaPeriodic_b}
\end{gather}
where: $\rho$, $\sigma$, $\alpha$ and $\delta$ are real
constants, $\phi_0=\phi_0(x):{\mathbb R}\rightarrow{\mathbb C}$ is an
${\sf L}-$periodic function 
and $f=f(t,x):[0,T]\times{\mathbb R}\rightarrow{\mathbb C}$
is a function which is  ${\sf L}-$periodic on ${\mathbb R}$. 
\par
The nonlinear partial differential equation \eqref{HirotaPeriodic} is known as:
`{\it the cubic nonlinear Schr{\"o}dinger equation}' (cNLS) \cite{Chiao}
when $\alpha=\sigma=0$, `{\it the Hirota equation}' (H) \cite{Hirotar}
when
\begin{equation}\label{Hirota_Balance}
\rho\,\alpha=\sigma\,\delta
\end{equation}
and `{\it the complex modified Korteweg-de Vries equation}' (cmKdV) \cite{Taha}
when $\rho=\delta=0$. Since the (cmKdV) equation is a special case
of the  (H) equation, we adopt, for equation \eqref{HirotaPeriodic}, the name
Schr{\"o}dinger-Hirota (SH) equation (cf. \cite{Biswas}).
The (SH) equation is widely used in the description of the propagation of
optical solitons in a dispersive optical fiber
(see, e.g., \cite{Hasewaga}, \cite{Karan}), and in the modeling of the motion
of vortex filaments (see, e.g., \cite{Fuku}, \cite{Karan}).
%
%
%
%
%
\par
For existence and uniqueness results in the homogeneous case we refer the reader to \cite{Karan}.
There, it is shown that: i) if $\phi_0\in H^{\ell}_{\rm per}(0,{\sf L})$ for any integer $\ell\ge2$,
then there exists $T>0$ such that the problem has a unique solution $\phi\in C([0,T],H^{\ell}_{\rm per}(0,{\sf L}))$
with $\phi_t\in C([0,T],H_{\rm per}^{\ell-3}(0,{\sf L}))$ (see Theorem~2.1 in \cite{Karan}),
and ii) if $\phi_0\in H^3_{\rm per}(0,{\sf L})$, $\rho\not=0$ and \eqref{Hirota_Balance} is satisfied,
then 
$\phi\in C([0,+\infty),H^2_{\rm per}(0,{\sf L}))\cap C([0,+\infty),H^1_{\rm per}(0,{\sf L}))$
(see Theorem~2.5 in \cite{Karan}),
i.e., there is no finite time blow up for the solution and its first space derivate.  The latter results,
are based on the following conservation properties:
\begin{equation}\label{conserva_1}
\int_0^{\ssy{\sf L}}|\phi(t,x)|^2\,dx=\int_0^{\ssy{\sf L}}|\phi_0(x)|^2\,dx\quad\forall\,t\in[0,T],
\end{equation}
and, when \eqref{Hirota_Balance} holds, 
\begin{equation}\label{conserva_2}
\rho\,\int_0^{\ssy{\sf L}}|\phi_x(t,x)|^2\,dx-\tfrac{\delta}{2}\,\int_0^{\ssy{\sf L}}|\phi(t,x)|^4\,dx=
\rho\,\int_0^{\ssy{\sf L}}|\phi_0'(x)|^2\,dx-\tfrac{\delta}{2}\,\int_0^{\ssy{\sf L}}|\phi_0(x)|^4\,dx
\quad\forall\,t\in[0,T].
\end{equation}
However, it is easily seen that there exist unique, smooth,  special solutions to the homogeneous
problem \eqref{HirotaPeriodic}-\eqref{HirotaPeriodic_b} for any choice of
the parameters $\rho$, $\alpha$, $\delta$ and $\sigma$
(see Section~\ref{specialsolutions}).
\par
In the paper at hand, we focus on the numerical approximation of the solution to the problem
\eqref{HirotaPeriodic}-\eqref{HirotaPeriodic_b}. In particular, we propose a new
linear implicit finite difference method, the convergence of which is ensured by
providing an optimal, second order, error estimate. For the needs of the convergence analysis, we will
assume that the problem above admits a unique solution that is sufficiently smooth.
%
%
%
\subsection{The Finite Difference Method (FDM)}\label{TheFDMethod}
\par
Let ${\mathbb N}$ be the set of all positive integers, $x_a$, $x_b\in{\mathbb R}$ with $x_b-x_a={\sf L}$,
and $N$, $J\in{\mathbb N}$. Then, we define a uniform partition
of the time interval $[0,T]$ with time-step $\tau:=\tfrac{T}{N}$
and nodes $t_n:=n\,\tau$ for $n=0,\dots,N$, and
a uniform partition of ${\mathbb R}$ with mesh-width $h:=\tfrac{{\sf L}}{J}$
and nodes  $x_j:=x_a+jh$ for $j\in{\mathbb Z}$. Also, we introduce
the discrete space:
\begin{equation*}\label{discper}
\dspace:=\left\{\,(\psi_j)_{j\in\mathbb{Z}}\in{\mathbb C}^{\infty}:
\quad \psi_j=\psi_{j+J}\quad\forall\,j\in{\mathbb Z}\,\right\},
\end{equation*}
a discrete product operator
$\cdot\otimes\cdot:\dspace\times\dspace\rightarrow\dspace$
by
\begin{equation*}
(v{\otimes}w)_j=v_j\,w_j\quad\forall\,j\in{\mathbb Z},
\quad\forall\,v,w\in\dspace,
\end{equation*}
a discrete Laplacian operator $\Delta_h:\dspace\rightarrow\dspace$
by
\begin{equation*}
\Delta_hv_j:=\tfrac{v_{j-1}-2v_j+v_{j+1}}{h^2}
\quad\forall\,j\in{\mathbb Z},\quad\forall\,v\in\dspace,
\end{equation*}
a discrete space derivative operator $\partial_h:\dspace\rightarrow\dspace$
by
\begin{equation}\label{discrete_D1a}
\partial_hv_j:=\tfrac{v_{j+1}-v_{j-1}}{2h}
\quad\forall\,j\in{\mathbb Z},\quad\forall\,v\in\dspace
\end{equation}
and a discrete space average operator ${\mathcal A}_h:\dspace\rightarrow\dspace$ by
\begin{equation}\label{fyrom1}
{\mathcal A}_hv_j:=\tfrac{1}{2}\,(v_{j+1}+v_{j-1})
\quad\forall\,j\in{\mathbb Z},\ \ \forall\,v\in\dspace.
\end{equation}
In addition, we define the space
${\sf C}_{\mathrm{per}}:=\left\{\,v\in C({\mathbb R};{\mathbb C}):\ \
v(x+{\sf L})=v(x)\quad\forall\,x\in{\mathbb R}\,\right\}$ and
the o\-pe\-rator $\Lambda_h:{\sf C}_{\mathrm{per}}\rightarrow\dspace$
by $(\Lambda_hv)_j:=v(x_j)$ for $j\in{\mathbb Z}$ and
$v\in{\sf C}_{\mathrm{per}}$,
and we set $\phi^{n}:=\Lambda_h(\phi(t_{n},\cdot))$ for $n=0,\dots,N$.
For $\ell\in\nset$ and for any function $g:{\mathbb
C}^{\,\ell}\rightarrow{\mathbb C}$ and any
$w=(w^1,\dots,w^{\ell})\in(\dspace)^{\ell}$, we define
$g(w)\in\dspace$ by
$(g(w))_j:=g\left(w_j^1,\dots,w^{\ell}_j\right)$
for $j\in{\mathbb Z}$.
\par
For $n=0,\dots,N$, the proposed linear implicit finite difference method
constructs, recursively, an approximation $\Phi^n\in\dspace$ of $\phi^n$
following the steps below:
\par
{\tt Step 1}: Set
\begin{equation}\label{fidifin}
\Phi^0:=\phi^0.
\end{equation}
\par
{\tt Step 2}: Find $\Phi^1\in\dspace$ such that
\begin{equation}\label{scheme11}
\begin{split}
\tfrac{\Phi^1-\Phi^0}{\tau}=&\, {\rm
i}\,\rho\,\Delta_{h}\left(\tfrac{\Phi^1+\Phi^0}{2}\right)
-\sigma\,\partial_{h}\Delta_h\left(\tfrac{\Phi^1+\Phi^0}{2}\right)\\
&-3\,\alpha\,{\mathcal A}_h\left[\,|\Phi^0|^2
\otimes\partial_h\left(\tfrac{\Phi^1+\Phi^0}{2}\right)\,\right]
+{\rm i}\,\delta\,\left[|\Phi^0|^2\otimes
\left(\tfrac{\Phi^1+\Phi^0}{2}\right)\right]+F^{\half},
\end{split}
\end{equation}
where $F^{\half}\in\dspace$ with $(F^{\half})_j:=f(\frac{\tau}{2},x_j)$ for $j\in{\mathbb Z}$.
\par
{\tt Step 3}: For $n=1,\dots,N-1$, find $\Phi^{n+1}\in\dspace$
such that
\begin{equation}\label{scheme1}
\begin{split}
\tfrac{\Phi^{n+1}-\Phi^{n-1}}{2\,\tau}=&\,
{\rm i}\,\rho\,\Delta_h\left(\tfrac{\Phi^{n+1}+\Phi^{n-1}}{2}\right)
-\sigma\,\partial_h\Delta_h\left(\tfrac{\Phi^{n+1}+\Phi^{n-1}}{2}\right)\\
&-3\,\alpha\,{\mathcal A}_h\left[\,|\Phi^n|^2
\otimes\partial_h\left(\tfrac{\Phi^{n+1}+\Phi^{n-1}}{2}\right)\,\right] +{\rm
i}\,\delta\,\left[|\Phi^n|^2\otimes
\,\left(\tfrac{\Phi^{n+1}+\Phi^{n-1}}{2}\right)\right]+F^{n},
\end{split}
\end{equation}
where $F^{n}\in\dspace$ with $(F^{n})_j:=f(t_n,x_j)$ for $j\in{\mathbb Z}$. 
\par
Thus, at every time step, the computation of the finite difference approximations above,
requires the solution of a linear system of algebraic
equations the matrix of which is cyclic pentadiagonal.
\subsection{Overview and references}
%
%
%
%
%
%
%
%
%
%
\par

\par
The finite difference method formulated and computationally tested in \cite{Fei}
stands out among the known linear implicit methods for the
discretization of the (cNLS) equation, since it satisfies a discrete analogue of 
\eqref{conserva_1} and \eqref{conserva_2}.
Both discrete conservation laws ensure that
the finite difference approximations are uniformly
bounded in the discrete $L^{\infty}-$norm,
which leads to an optimal order error estimate 
(see, e.g., \cite{Geronti}, \cite{Georgios1}).
\par
The (FDM) we propose, for the numerical treatment of the
solution to the (SH) equation, is an extension of the method proposed in \cite{Fei}. However,  we
are not able to show that the (FDM) appro\-xi\-mations are uniformly bounded in the discrete
$W^{1,\infty}-$norm, which is necessary in handling the nonlinearities of the (SH) equation.
Therefore, the only choice left is to work with an auxiliary modified scheme.
\par
Using an idea from \cite{Georgios1}, first we define an operational mollifier 
depending on a positive para\-me\-ter $\lambda$ and the discrete $W^{1,\infty}-$norm
(see Section~\ref{section31}), and then we formulate a Modified Finite Difference Scheme
(MFDS) by mollifying properly the nonlinear terms of the (FDM) (see Section~\ref{Here_Is_MFD}).
Assuming that $\tau$ is small enough and $\lambda$ large enough, for the non-computable
(MFDS) approximations first we show that are well-defined (see Proposition~\ref{ModExist})
and then we establish an optimal, second order error estimate in a discrete $H^1-$norm
(see Theorem~\ref{Conv_Of_Mod}), which,
after applying a discrete Sobolev inequality, yields a convergence result in the discrete
$W^{1,\infty}-$norm.
Letting $h$ and $\tau^4\,h^{-1}$ to be small enough,  the latter convergence result yields that the discrete
$W^{1,\infty}-$norm distance of the (MFDS) approximations from the exact solution to the problem is lower
than $\lambda$. Finally, due to the special structure of the mollifier, we are able to conclude that  the (MFDS)
approximations are also (FDM) approximations and that the (FDM) approximations are unique
(see Theorem~\ref{ConvergenceRate}). Thus, the (FDM) inherits the convergence properties of the (MFDS).
%
%
%
%
\par
Numerical investigations of the solution to the  (SH) equation has been reported in
\cite{Karan} and \cite{Biswas} with no description of the numerical method used.
%
%
%
Also, numerical methods for the approximation of the solution to the (cmKdV) equation
has been proposed in \cite{Taha}, \cite{AlHarbi} and \cite{Raslan}.
In particular, we refer the reader to \cite{Taha} for a nonlinear Crank-Nicolson-type
finite difference method, to \cite{AlHarbi} for a linearized Crank-Nicolson finite difference method, and
to \cite{Raslan} for a Crank-Nicolson/spline collocation method. We note that none
of the above mentioned papers includes an error analysis of the methods proposed and numerically tested.
We would like also to stress that we are not aware of any other scientific work dealing with the error analysis
of a numerical method for the (SH) equation.
\par
We close this section by giving a brief overview of the paper.
In Section~\ref{Section2} we introduce notation and we prove 
a series of auxiliary results that we will often use later
in the analysis of the (MFDS) and the (FDM) approximations.
Section~\ref{Section3} is dedicated to  the construction and
the analysis of the (MFDS) approximations.
In Section~\ref{Section4} we show the well-posedness and the
convergence of the (FDM) approximations.
Finally, we expose results from numerical
experiments in Section~\ref{Section5}.
%
%
\section{Notation and Preliminaries}\label{Section2}
%
%
Let $\partial_h:\dspace\rightarrow\dspace$ be the discrete space derivative 
operator introduced by \eqref{discrete_D1a}; then, for $\ell\ge2$, we define,
recursively, an $\ell-$order discrete space derivative operator
$\partial^{\ell}_h:\dspace\rightarrow\dspace$,  by
$\partial_h^{\ell}=\partial_h\circ\partial^{\ell-1}_h$,
under the notation convention $\partial_h^1\equiv\partial_h$.
In addition, we define the shift operators $\sigma^{+}$,
$\sigma^{-}:\dspace\rightarrow\dspace$  by
$\sigma^{+}v_j:=v_{j+1}$ and $\sigma^{-}v_j:=v_{j-1}$ for $j\in{\mathbb
Z}$ and $v\in\dspace$, and another discrete space derivative operator
$\delta_h:\dspace\rightarrow\dspace$ by
\begin{equation*}
\delta_hv_j:=\tfrac{v_{j+1}-v_j}{h}\quad\forall\,j\in{\mathbb Z},
\quad\forall\,v\in\dspace.
\end{equation*}
\par
On $\dspace$ we define the discrete inner products
$(\cdot,\cdot)_{0,h}$ and $(\cdot,\cdot)_{1,h}$ by
$(v,z)_{0,h}:=h\sum_{j=1}^{\ssy J}v_j\,\overline{z_j}$ and
$(v,z)_{1,h}:=(v,z)_{0,h}+(\partial_hv,\partial_hz)_{0,h}$ for
$v,z\in\dspace$,
and we shall denote by $\|\cdot\|_{0,h}$ the norm corresponding to
the inner product $(\cdot,\cdot)_{0,h}$, i.e.
$\|v\|_{0,h}:=\sqrt{(v,v)_{0,h}}$ for
$v\in\dspace$. Also, we define on ${\mathfrak X}_h$
a discrete $L^{\infty}$-norm $|\cdot|_{\infty,h}$
by $|v|_{\infty,h}:=\max_{1\leq j\leq{\ssy J}}|v_j|$ for
$v\in\dspace$.
For $m\in{\mathbb N}$ we introduce a discrete $H^m$-seminorm
$|\cdot|_{m,h}$ by $|v|_{m,h}:=\|\partial_{h}^mv\|_{0,h}$
for $v\in\dspace$, a discrete $H^m$-norm $\|\cdot\|_{m,h}$ by
$\|v\|_{m,h}:=\left(\,\|v\|_{0,h}^2+\sum_{\ell=1}^{m}
|v|^2_{\ell,h}\,\right)^{\frac{1}{2}}$ for $v\in\dspace$,
a discrete $W^{m,\infty}$-seminorm
$|\cdot|_{m,\infty,h}$ by
$|v|_{m,\infty,h}:=|\partial_h^mv|_{\infty,h}$ for
$v\in\dspace$, and a discrete $W^{m,\infty}$-norm
$\|\cdot\|_{m,\infty,h}$, by
$\|v\|_{m,\infty,h}:=\max\left\{|v|_{\infty,h},
\max_{1\leq{\ell}\leq{m}}|v|_{\ell,\infty,h}\right\}$ for
$v\in\dspace$.
For given norm $\nu$ on $\dspace$, $v\in\dspace$ and $\varepsilon>0$,
we define the closed ball
\begin{equation*}
{\mathfrak B}_{\sf c}(v,\varepsilon;\nu):=\left\{w\in\dspace:\,\nu(v-w)\leq\varepsilon\right\}.
\end{equation*}
%
%
\par
Below, we provide a series of auxiliary results that we will 
often use in the rest of the paper.
%
%
%
%
\begin{lem}
For all $v$, $z\in\dspace$, we have
\begin{gather}
\sigma^{+}\sigma^{-}z=\sigma^{-}\sigma^{+}z=z,\label{NewEraM10}\\
\partial_h\Delta_hv=\Delta_h\partial_hv,\label{PROP_commut}\\
\partial_h{\mathcal A}_hv={\mathcal A}_h\partial_hv,\label{PROP_commut2}\\
%
%
\partial_h(v{\otimes}z)=\partial_hv{\otimes}\sigma^{+}z
+\sigma^{-}v\otimes\partial_hz.\label{PROD_DIFF}
\end{gather}
\end{lem}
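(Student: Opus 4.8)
The plan is to verify each of the four identities by direct computation from the definitions of the operators, exploiting the fact that all operators act pointwise (indexwise) and that the shift operators $\sigma^+$, $\sigma^-$ commute with one another. The identity \eqref{NewEraM10} is immediate: by definition $\sigma^+\sigma^- z_j = \sigma^+(z_{j-1}) = z_j$, and symmetrically $\sigma^-\sigma^+ z_j = z_j$, so both compositions equal the identity. This first identity is essentially a sanity check and will be used implicitly when reindexing in the later computations.

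For the two commutator identities \eqref{PROP_commut} and \eqref{PROP_commut2}, the cleanest route is to observe that $\Delta_h$, $\partial_h$ and ${\mathcal A}_h$ can all be written as fixed linear combinations of the shift operators $\sigma^+$, $\sigma^-$ and the identity. Explicitly, $\partial_h = \tfrac{1}{2h}(\sigma^+ - \sigma^-)$, while $\Delta_h = \tfrac{1}{h^2}(\sigma^- - 2\,\mathrm{Id} + \sigma^+)$ and ${\mathcal A}_h = \tfrac12(\sigma^+ + \sigma^-)$. Since $\sigma^+$ and $\sigma^-$ commute with each other and with the identity, any two such polynomials in $\sigma^+,\sigma^-$ commute. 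Thus $\partial_h\Delta_h = \Delta_h\partial_h$ and $\partial_h{\mathcal A}_h = {\mathcal A}_h\partial_h$ follow at once. I expect this operator-algebra viewpoint to make the proof transparent, though one may alternatively expand both sides indexwise and match terms; either way the computation is routine.

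The last identity \eqref{PROD_DIFF}, a discrete Leibniz rule, is the main point of the lemma and the one requiring genuine care. Here I would compute directly: from \eqref{discrete_D1a},
\begin{equation*}
\partial_h(v\otimes z)_j = \tfrac{(v\otimes z)_{j+1} - (v\otimes z)_{j-1}}{2h}
= \tfrac{v_{j+1} z_{j+1} - v_{j-1} z_{j-1}}{2h}.
\end{equation*}
The key algebraic step is to insert and cancel a cross term, writing the numerator as $(v_{j+1} - v_{j-1})\,z_{j+1} + v_{j-1}\,(z_{j+1} - z_{j-1})$. Dividing by $2h$ and recognizing $\tfrac{v_{j+1}-v_{j-1}}{2h} = (\partial_h v)_j$, $z_{j+1} = (\sigma^+ z)_j$, $v_{j-1} = (\sigma^- v)_j$, and $\tfrac{z_{j+1}-z_{j-1}}{2h} = (\partial_h z)_j$, one reads off exactly $(\partial_h v\otimes\sigma^+ z)_j + (\sigma^- v\otimes\partial_h z)_j$.

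The main obstacle, such as it is, lies in the bookkeeping of \eqref{PROD_DIFF}: the asymmetric appearance of $\sigma^+$ on $z$ and $\sigma^-$ on $v$ is dictated by which cross term one chooses to add and subtract, and it is easy to produce a superficially different but equivalent form. I would therefore present the splitting explicitly and remark that the symmetric alternative (adding and subtracting $v_{j+1}z_{j-1}$ instead) yields the same identity with the roles of the shifts interchanged, which is consistent by \eqref{NewEraM10}. All four identities hold for every index $j\in\mathbb{Z}$, and since the product, difference and shift operators all preserve $J$-periodicity, the resulting elements lie in $\dspace$ as required.
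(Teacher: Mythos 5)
Your proof is correct, and it is essentially the approach the paper intends: the paper dismisses the lemma with ``the verification of the formulas above is straightforward,'' and your indexwise computations (including the operator-polynomial observation that $\partial_h$, $\Delta_h$, ${\mathcal A}_h$ are commuting linear combinations of $\sigma^{+}$, $\sigma^{-}$ and the identity, and the add-and-subtract cross term $v_{j-1}z_{j+1}$ for \eqref{PROD_DIFF}) are exactly the routine verification being alluded to. The only nitpick is your closing remark: the alternative splitting via $v_{j+1}z_{j-1}$ yields $\partial_hv\otimes\sigma^{-}z+\sigma^{+}v\otimes\partial_hz$, which agrees with \eqref{PROD_DIFF} simply because both equal $\partial_h(v\otimes z)$, not as a consequence of \eqref{NewEraM10} --- but this aside does not affect the validity of your proof.
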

%
%
%
\begin{proof}
The verification of the formulas above is straightforward.
\end{proof}
%
%
\begin{lem}\label{Lemma1}
For all $v,z\in\dspace$ it holds that
\begin{gather}
(\sigma^{+}v,z)_{0,h}=(v,\sigma^{-}z)_{0,h},\label{R_Base1}\\
(\sigma^{-}v,z)_{0,h}=(v,\sigma^{+}z)_{0,h},\label{R_Base2}\\
(\partial_hv,z)_{0,h}=-(v,\partial_hz)_{0,h},\label{PROP_I}\\
(\Delta_hv,z)_{0,h}=-(\delta_hv,\delta_hz)_{0,h}=(v,\Delta_hz)_{0,h},
\label{NewEra1}\\
(\Delta_hv,v)_h=-\|\delta_hv\|^2_{0,h},\label{PROP_II}\\
\mathrm{Re}(\partial_h\Delta_hv,v)_{0,h}=0.\label{PROP_III}
\end{gather}
\end{lem}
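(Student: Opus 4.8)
The plan is to verify each of the six identities in \lemref{Lemma1} by direct calculation using the definitions of the discrete operators together with the periodicity of elements of $\dspace$ and the previously-established shift relations \eqref{NewEraM10}. Throughout, the key mechanism is a \emph{discrete summation by parts}: since every $v\in\dspace$ satisfies $v_j=v_{j+J}$, a sum of the form $h\sum_{j=1}^{J}(\sigma^{+}v)_j\,\overline{z_j}$ can be reindexed by shifting the summation variable, and the periodicity guarantees that the boundary terms produced by the shift cancel, leaving a clean sum with no leftover endpoint contributions.

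First I would prove the shift-adjoint formulas \eqref{R_Base1} and \eqref{R_Base2}, since these are the workhorses for everything that follows. For \eqref{R_Base1}, I would write $(\sigma^{+}v,z)_{0,h}=h\sum_{j=1}^{J}v_{j+1}\,\overline{z_j}$ and substitute $k=j+1$; the sum becomes $h\sum_{k=2}^{J+1}v_k\,\overline{z_{k-1}}$, and periodicity ($v_{J+1}=v_1$, $z_J=z_0$) lets me shift the range back to $\{1,\dots,J\}$, yielding $h\sum_{k=1}^{J}v_k\,\overline{z_{k-1}}=(v,\sigma^{-}z)_{0,h}$. The relation \eqref{R_Base2} follows identically, or simply by swapping the roles of $v$ and $z$. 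Next I would obtain \eqref{PROP_I} by expanding $\partial_h$ through its definition \eqref{discrete_D1a} as $\partial_h=\tfrac{1}{2h}(\sigma^{+}-\sigma^{-})$, applying \eqref{R_Base1}-\eqref{R_Base2} to each shift, and collecting terms: the adjoint of $\sigma^{+}$ is $\sigma^{-}$ and vice versa, so the sign flips and I recover $-(v,\partial_h z)_{0,h}$.

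For the Laplacian identities I would proceed similarly. The first equality in \eqref{NewEra1}, namely $(\Delta_h v,z)_{0,h}=-(\delta_h v,\delta_h z)_{0,h}$, is the standard discrete second-order summation-by-parts; I would write $\Delta_h=\tfrac{1}{h}(\delta_h-\sigma^{-}\delta_h)$ or expand directly, use the shift-adjoint relations once, and rearrange to exhibit the symmetric form in $\delta_h$. The self-adjointness $(\Delta_h v,z)_{0,h}=(v,\Delta_h z)_{0,h}$ then follows because the middle expression $-(\delta_h v,\delta_h z)_{0,h}$ is manifestly conjugate-symmetric up to the usual Hermitian conventions. Identity \eqref{PROP_II} is the special case $z=v$ of \eqref{NewEra1}, giving $(\Delta_h v,v)_{0,h}=-\|\delta_h v\|_{0,h}^2$ (here I read the unsubscripted inner product $(\cdot,\cdot)_h$ as $(\cdot,\cdot)_{0,h}$).

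The last identity \eqref{PROP_III} is the one I expect to require the most care, since it is the genuinely \emph{complex} statement: I must show $\mathrm{Re}(\partial_h\Delta_h v,v)_{0,h}=0$. My plan is to combine the commutation relation $\partial_h\Delta_h=\Delta_h\partial_h$ from \eqref{PROP_commut} with the skew-adjointness of $\partial_h$ in \eqref{PROP_I} and the self-adjointness of $\Delta_h$ in \eqref{NewEra1}. Writing $A:=(\partial_h\Delta_h v,v)_{0,h}$, I would move $\partial_h$ across the inner product to get $A=-(\Delta_h v,\partial_h v)_{0,h}$, then move $\Delta_h$ across to get $-(v,\Delta_h\partial_h v)_{0,h}=-(v,\partial_h\Delta_h v)_{0,h}=-\overline{(\partial_h\Delta_h v,v)_{0,h}}=-\overline{A}$. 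Hence $A=-\overline{A}$, which forces $\mathrm{Re}\,A=0$. The main obstacle here is bookkeeping the complex conjugation correctly through the Hermitian inner product and confirming that the two antisymmetries (one from $\partial_h$, one from taking the conjugate) compose to give a purely imaginary quantity rather than cancelling; once the sign tracking is done carefully, the conclusion is immediate.
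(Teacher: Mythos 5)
Your proposal is correct and follows essentially the same route as the paper: reindexing via periodicity for the shift-adjoint identities \eqref{R_Base1}--\eqref{R_Base2}, the decompositions $\partial_h=\tfrac{1}{2h}(\sigma^{+}-\sigma^{-})$ and $\Delta_h=\tfrac{1}{h}(\delta_h-\sigma^{-}\delta_h)$ for \eqref{PROP_I} and \eqref{NewEra1}, the specialization $z=v$ for \eqref{PROP_II}, and the chain combining \eqref{PROP_commut}, \eqref{NewEra1} and \eqref{PROP_I} to force $A=-\overline{A}$ for \eqref{PROP_III} (you merely apply the three ingredients in a different order than the paper, and prove \eqref{R_Base2} by direct reindexing where the paper uses conjugate symmetry of the inner product --- immaterial variants). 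Your reading of the unsubscripted $(\cdot,\cdot)_h$ in \eqref{PROP_II} as $(\cdot,\cdot)_{0,h}$ is the intended one.
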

%
%
%
%
\begin{proof}
Let $v,z\in\dspace$. First, we establish \eqref{R_Base1}
proceeding as follows:
\begin{equation*}
\begin{split}
(\sigma^{+}v,z)_{0,h}=&\,h\,\sum_{j=1}^{\ssy J}v_{j+1}\,\overline{z_j}
=h\,\sum_{j=2}^{\ssy J+1}v_j\,\overline{z_{j-1}}
=h\,\sum_{j=1}^{\ssy J}v_j\,\overline{z_{j-1}}
=(v,\sigma^{-}z)_{0,h}.
\end{split}
\end{equation*}
Then, we apply \eqref{R_Base1} to obtain 
\begin{equation*}
\begin{split}
(\sigma^{-}v,z)_{0,h}=&\,\overline{(z,\sigma^{-}v)_{0,h}}
=\overline{(\sigma^{+}z,v)_{0,h}}
=(v,\sigma^{+}z)_{0,h}.\\
\end{split}
\end{equation*}
To obtain \eqref{PROP_I}, we combine
\eqref{R_Base1} and \eqref{R_Base2} as follows:
\begin{equation*}
(\partial_hv,z)_{0,h}=\tfrac{1}{2h}\,(\sigma^{+}v-\sigma^{-}v,z)_{0,h}
=\tfrac{1}{2h}\,(v,\sigma^{-}z-\sigma^{+}z)_{0,h}
=-(v,\partial_hz)_{0,h}.
\end{equation*}
Also, using \eqref{R_Base2}, we have
\begin{equation}\label{Dec2015_31a}
\begin{split}
(\Delta_hv,z)_{0,h}=&\,\tfrac{1}{h^2}(\sigma^{+}v-2\,v+\sigma^{-}v,z)_{0,h}\\
=&\,\tfrac{1}{h}\,\left[\,(\delta_hv,z)_{0,h}-(\sigma^{-}\delta_hv,z)_{0,h}\,\right]\\
=&\,\tfrac{1}{h}\,(\delta_hv,z-\sigma^{+}z)_{0,h}\\
=&\,-(\delta_hv,\delta_hz)_{0,h}\\
\end{split}
\end{equation}
which yields
\begin{equation}\label{Dec2015_31b}
(v,\Delta_hz)=\overline{(\Delta_hz,v)_{0,h}}
=-\overline{(\delta_hz,\delta_hv)_{0,h}}
=-(\delta_hv,\delta_hz)_{0,h}.
\end{equation}
Thus, \eqref{NewEra1} is a simple consequence of \eqref{Dec2015_31a} and
\eqref{Dec2015_31b}. Relation \eqref{PROP_II} follows easily from
\eqref{NewEra1} setting $z=v$. Finally, we use \eqref{PROP_commut}, \eqref{NewEra1} and
\eqref{PROP_I}, to have
\begin{equation*}
\begin{split}
(\partial_h\Delta_hv,v)_{0,h}=&(\Delta_h\partial_hv,v)_{0,h}
=(\partial_hv,\Delta_hv)_{0,h}
=-(v, \partial_h\Delta_hv)_{0,h}
=-{\overline{(\partial_h\Delta_hv,v)_{0,h}}},
\end{split}
\end{equation*}
which, obviously, yields \eqref{PROP_III}.
\end{proof}
%
%
%
%
\begin{lem}\label{Lemma2}
For all $v,z\in\dspace$ it holds that
\begin{gather}
|\partial_h(|z|^2)|_{\infty,h}\leq
\,2\,\,|z|_{\infty,h}\,\,|z|_{1,\infty,h},\label{BPROP_I}\\
%
%
\big\|\,|z|^2-|v|^2\,\big\|_{0,h}\leq
\,(|z|_{\infty,h}+|v|_{\infty,h})\,\,\|z-v\|_{0,h}.
\label{BPROP_III}
\end{gather}
\end{lem}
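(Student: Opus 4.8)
My plan is to establish the two estimates separately, each by a short direct computation carried out at the level of individual grid indices, relying only on the discrete product rule \eqref{PROD_DIFF} and elementary inequalities for complex numbers.

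For \eqref{BPROP_I}, I would first write $|z|^2=z\otimes\overline{z}$, where $\overline{z}$ denotes the sequence with entries $\overline{z_j}$, and then apply the discrete product rule \eqref{PROD_DIFF} to get
\[
\partial_h(|z|^2)=\partial_h z\otimes\sigma^{+}\overline{z}+\sigma^{-}z\otimes\partial_h\overline{z}.
\]
Because $\partial_h$ acts entrywise through a real linear combination of shifts, one has $\partial_h\overline{z}=\overline{\partial_h z}$, so reading off the $j$-th entry yields
\[
(\partial_h(|z|^2))_j=(\partial_h z)_j\,\overline{z_{j+1}}+z_{j-1}\,\overline{(\partial_h z)_j}.
\]
Taking moduli, the triangle inequality gives $|(\partial_h(|z|^2))_j|\leq|(\partial_h z)_j|\,(|z_{j+1}|+|z_{j-1}|)$, and each of $|z_{j+1}|$, $|z_{j-1}|$ is bounded by $|z|_{\infty,h}$. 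Passing to the maximum over $j$ then produces the factor $2\,|z|_{\infty,h}\,|z|_{1,\infty,h}$ claimed.

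For \eqref{BPROP_III}, I would work from the pointwise factorization $|z_j|^2-|v_j|^2=(|z_j|+|v_j|)(|z_j|-|v_j|)$. The reverse triangle inequality gives $\big||z_j|-|v_j|\big|\leq|z_j-v_j|$, while $|z_j|+|v_j|\leq|z|_{\infty,h}+|v|_{\infty,h}$, so
\[
\big||z_j|^2-|v_j|^2\big|\leq(|z|_{\infty,h}+|v|_{\infty,h})\,|z_j-v_j|.
\]
Squaring, multiplying by $h$, and summing over $j=1,\dots,J$ pulls the index-independent factor outside the sum and identifies the remaining sum as $\|z-v\|_{0,h}^2$; taking square roots finishes the estimate.

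Neither estimate presents a genuine obstacle, since both reduce to elementary complex-number inequalities once the discrete structure is unfolded. The only point demanding slight care is the bookkeeping of the shift operators $\sigma^{\pm}$ produced by the product rule in the first estimate; the observation that dissolves it is that $|\cdot|_{\infty,h}$ is invariant under $\sigma^{\pm}$, because the maximum is taken over a full period and hence each shifted entry is controlled by the same bound.
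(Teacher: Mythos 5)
Your proposal is correct and follows essentially the same route as the paper: the paper's proof rests on the pointwise identities $\bigl(\partial_h(|z|^2)\bigr)_j=\mathrm{Re}\bigl[\partial_h z_j\,\overline{(z_{j+1}+z_{j-1})}\bigr]$ and $|z_j|^2-|v_j|^2=\mathrm{Re}\bigl[(z_j-v_j)\,\overline{(z_j+v_j)}\bigr]$, which are exactly the identities you reach via the product rule \eqref{PROD_DIFF} with $\partial_h\overline{z}=\overline{\partial_h z}$, and via the factorization with the reverse triangle inequality, respectively. The subsequent bounding — triangle inequality, the bound $|z_{j\pm1}|\leq|z|_{\infty,h}$ justified by $J$-periodicity, and summation for the $\|\cdot\|_{0,h}$ estimate — is identical in both arguments.
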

%
%
\begin{proof}
Let $v$, $z\in\dspace$. Observing that
\begin{equation*}
|\partial_h(|z|^2)|_{\infty,h}=\max_{1\leq{j}\leq{\ssy J}}
\left|{\mathrm{Re}}\left[\,\partial_hz_j\,{\overline{(z_{j+1}+z_{j-1})}}
\,\right]\,\right|
\end{equation*}
and
\begin{equation*}
\big\|\,|z|^2-|v|^2\,\big\|_{0,h}=\left[h\sum_{j=1}^{\ssy
J} \left|{\mathrm{Re}}\left[\,
(z_j-v_j)\,{\overline{(z_j+v_j)}}
\,\right]\,\right|^2\right]^{\frac{1}{2}}
\end{equation*}
\eqref{BPROP_I} and \eqref{BPROP_III} easily follow.
\end{proof}
%
%
%
%
%
%
%
%
%
%
%
\begin{lem}
Let ${\mathcal A}_h$ be the space average operator
defined by \eqref{fyrom1}. Then, for $w$, $z\in\dspace$,
it holds that
\begin{gather}
({\mathcal A}_hw,z)_{0,h}=(w,{\mathcal A}_hz)_{0,h},\label{PROP_IV}\\
\|\sigma^{+}z\|_{0,h}=\|z\|_{0,h},\label{NewEra3}\\
\|\sigma^{-}w\|_{0,h}=\|w\|_{0,h},\label{NewEra4}\\
\|{\mathcal A}_hz\|_{0,h}\leq\|z\|_{0,h},\label{PROP_V}\\
{\mathrm{Re}}\left(\,{\mathcal
A}_h(|z|^2\otimes\partial_hw),w\,\right)_{0,h}
=-\tfrac{1}{2}
\,\left(\,\partial_h(|z|^2),|w|^2\,\right)_{0,h}.\label{PROP_VII}
\end{gather}
\end{lem}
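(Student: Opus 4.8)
The plan is to exploit the decomposition ${\mathcal A}_h=\tfrac12(\sigma^{+}+\sigma^{-})$, which is immediate from \eqref{fyrom1}, and to reduce each identity to the shift-operator relations already established. First I would dispose of the four routine statements. For \eqref{PROP_IV}, writing ${\mathcal A}_h w=\tfrac12(\sigma^{+}w+\sigma^{-}w)$ and applying \eqref{R_Base1} to the $\sigma^{+}$-term and \eqref{R_Base2} to the $\sigma^{-}$-term transfers each shift onto $z$ and reassembles ${\mathcal A}_h z$, giving the self-adjointness at once. The isometries \eqref{NewEra3} and \eqref{NewEra4} follow either by a one-line reindexing that uses the $J$-periodicity of the sequences, or by combining $\|\sigma^{+}z\|_{0,h}^2=(\sigma^{+}z,\sigma^{+}z)_{0,h}=(z,\sigma^{-}\sigma^{+}z)_{0,h}$ with \eqref{NewEraM10}. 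Then \eqref{PROP_V} is simply the triangle inequality applied to ${\mathcal A}_h z=\tfrac12(\sigma^{+}z+\sigma^{-}z)$ together with \eqref{NewEra3} and \eqref{NewEra4}.

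The substantive identity is \eqref{PROP_VII}, and this is where I expect the real work to be. I would first use the self-adjointness \eqref{PROP_IV} to rewrite the left-hand side as $\mathrm{Re}\,(|z|^2\otimes\partial_h w,{\mathcal A}_h w)_{0,h}$, and then expand the summand pointwise via \eqref{discrete_D1a} and \eqref{fyrom1}: up to the factor $|z_j|^2$ (which is real), the $j$-th term carries $(w_{j+1}-w_{j-1})\,\overline{(w_{j+1}+w_{j-1})}$, and the overall constant works out to $\tfrac14$ after the factors of $h$ cancel. The key observation is that the cross terms $w_{j+1}\overline{w_{j-1}}-w_{j-1}\overline{w_{j+1}}$ are of the form $a-\overline a$, hence purely imaginary, so taking the real part annihilates them and leaves exactly $|w_{j+1}|^2-|w_{j-1}|^2$. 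This yields $\mathrm{Re}\,(\cdots)=\tfrac14\sum_{j=1}^{J}|z_j|^2\,(|w_{j+1}|^2-|w_{j-1}|^2)$.

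Finally I would identify the right-hand side, expanded in the same way, as $-\tfrac14\sum_{j=1}^{J}(|z_{j+1}|^2-|z_{j-1}|^2)\,|w_j|^2$, and reconcile the two expressions by a discrete summation by parts. Concretely, applying the skew-adjointness \eqref{PROP_I} of $\partial_h$ to the two real sequences $(|z_j|^2)_{j}$ and $(|w_j|^2)_{j}$ gives precisely $\sum_{j=1}^{J}|z_j|^2\,(|w_{j+1}|^2-|w_{j-1}|^2)=-\sum_{j=1}^{J}(|z_{j+1}|^2-|z_{j-1}|^2)\,|w_j|^2$, which matches the two sides and closes the identity. The only delicate point is the real-part cancellation of the cross terms; everything else is bookkeeping with the factors of $h$ and the relations already in hand.
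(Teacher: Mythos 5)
Your proposal is correct and follows essentially the same route as the paper's proof: shift-adjointness \eqref{R_Base1}--\eqref{R_Base2} for \eqref{PROP_IV}, the identity $(\sigma^{+}z,\sigma^{+}z)_{0,h}=(z,\sigma^{-}\sigma^{+}z)_{0,h}$ with \eqref{NewEraM10} for the isometries, the triangle inequality for \eqref{PROP_V}, and for \eqref{PROP_VII} the same chain of moving ${\mathcal A}_h$ onto $w$ via \eqref{PROP_IV}, killing the purely imaginary cross terms under the real part to obtain $\tfrac{1}{2}\left(|z|^2,\partial_h(|w|^2)\right)_{0,h}$, and concluding by the skew-adjointness \eqref{PROP_I} applied to the real sequences $|z|^2$ and $|w|^2$. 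No gaps; your explicit treatment of the cross-term cancellation is merely a more detailed writing of the step the paper performs implicitly.
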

%
%
%
%
\begin{proof}
Let $w$, $z\in\dspace$. Using \eqref{R_Base1} and \eqref{R_Base2},
we obtain \eqref{PROP_IV} proceeding as follows
\begin{equation*}
({\mathcal A}_hw,z)_{0,h}
=\tfrac{1}{2}\,(\sigma^{+}w+\sigma^{-}w,z)_{0,h}
=\tfrac{1}{2}\,(w,\sigma^{-}z+\sigma^{+}z)_{0,h}\\
=(w,{\mathcal A}_hz)_{0,h}.
\end{equation*}
Combining \eqref{R_Base1} and \eqref{NewEraM10}, we have
\begin{equation*}
(\sigma^{+}z,\sigma^{+}z)_{0,h}
=(z,\sigma^{-}\sigma^{+}z)_{0,h}=(z,z)_{0,h},
\end{equation*}
which, obviously yields \eqref{NewEra3}. To arrive at
\eqref{NewEra4}, we use \eqref{NewEra3} and
\eqref{NewEraM10} as follows
\begin{equation*}
\|\sigma^{-}w\|_{0,h}=\|\sigma^{+}\sigma^{-}w\|_{0,h}=\|w\|_{0,h}.
\end{equation*}
Also, we apply \eqref{NewEra3} and \eqref{NewEra4} to get
\begin{equation*}
\begin{split}
\|{\mathcal A}_hz\|_{0,h}=&\,\tfrac{1}{2}\,\|\sigma^{+}z+\sigma^{-}z\|_{0,h}\\
\leq&\,\tfrac{1}{2}\,\left(\,\|\sigma^{+}z\|_{0,h}+\|\sigma^{-}z\|_{0,h}\right)\\
\leq&\,\|z\|_{0,h},\\
\end{split}
\end{equation*}
which is \eqref{PROP_V}. Finally, we use \eqref{PROP_IV} and \eqref{PROP_I} to obtain
\begin{equation*}
\begin{split}
{\mathrm{Re}}\left(\,{\mathcal A}_h(|z|^2\otimes
\partial_hw),w\,\right)_{0,h}=&\,{\mathrm{Re}}\left(\,|z|^2\otimes
\partial_hw,{\mathcal A}_hw\,\right)_{0,h}\\
=&\,\tfrac{h}{2}\,\sum_{j=1}^{\ssy J}|z_j|^2
\,\tfrac{|w_{j+1}|^2-|w_{j-1}|^2}{2h}\\
=&\tfrac{1}{2}\,\left(|z|^2,\partial_h\left(|w|^2\right)\right)_{0,h}\\
=&\,-\tfrac{1}{2}\,\left(\,\partial_h(|z|^2),|w|^2\,\right)_{0,h}\\ 
\end{split}
\end{equation*}
which establishes \eqref{PROP_VII}.
%
%
%
%
\end{proof}
%
%
%
%
\begin{lem}\label{Sobemb1}
Let $J\geq 3$. Then, we have that
\begin{equation}\label{periodicSobo}
|\psi|_{\infty,h}\leq\,\tfrac{\sqrt{9+16\,{\sf L}^2}}{\sqrt{\sf L}}\,\,\|\psi\|_{1,h}
\quad\forall\,\psi\in\dspace.
\end{equation}
\end{lem}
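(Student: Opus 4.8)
The plan is to prove \eqref{periodicSobo} by a discrete analogue of the one-dimensional Sobolev embedding $H^1\hookrightarrow L^\infty$: combine an averaging argument producing a reference node with small value, and a discrete ``fundamental theorem of calculus'' that transports this estimate to the maximizing node. The engine for the transport is the elementary identity $|a|^2-|b|^2={\mathrm{Re}}[(a-b)\overline{(a+b)}]$ for $a,b\in{\mathbb C}$, applied with $a=\psi_{j+1}$, $b=\psi_{j-1}$. Since $\psi_{j+1}-\psi_{j-1}=2h\,\partial_h\psi_j$ and $\psi_{j+1}+\psi_{j-1}=2\,{\mathcal A}_h\psi_j$, this gives $|\psi_{j+1}|^2-|\psi_{j-1}|^2=4h\,{\mathrm{Re}}(\partial_h\psi_j\,\overline{{\mathcal A}_h\psi_j})$, and hence the pointwise bound $\big||\psi_{j+1}|^2-|\psi_{j-1}|^2\big|\le 2h\,|\partial_h\psi_j|\,(|\psi_{j+1}|+|\psi_{j-1}|)$, which is the discrete counterpart of $\tfrac{d}{dx}|u|^2=2\,{\mathrm{Re}}(u'\overline{u})$.

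Because $\partial_h$ is a central difference it only links nodes two apart, so the telescoping paths must move in steps of two. I would let $p$ be a maximizing index, consider the orbit $S$ of $p$ under shifts by two indices, and observe that $|S|\ge J/2$ in every case: a single orbit of size $J$ when $J$ is odd, and two orbits of size $J/2$ when $J$ is even. An averaging argument over $S$ then yields a node $q\in S$ with $|\psi_q|^2\le\frac{1}{|S|}\sum_{q'\in S}|\psi_{q'}|^2\le\frac{2}{J\,h}\,\|\psi\|_{0,h}^2=\frac{2}{{\sf L}}\,\|\psi\|_{0,h}^2$, using $J\,h={\sf L}$ and $\sum_{q'\in S}|\psi_{q'}|^2\le h^{-1}\|\psi\|_{0,h}^2$.

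Next I would telescope from $q$ to $p$ along the shorter step-two path inside $S$, so the ``center'' indices occurring in the sum are pairwise distinct. Summing the identity and applying the pointwise bound gives $|\psi_p|^2-|\psi_q|^2\le 2h\sum_{\text{path}}|\partial_h\psi_j|\,(|\psi_{j+1}|+|\psi_{j-1}|)$. A Cauchy--Schwarz step then uses $\sum_{\text{path}}|\partial_h\psi_j|^2\le h^{-1}|\psi|_{1,h}^2$ and $\sum_{\text{path}}(|\psi_{j+1}|+|\psi_{j-1}|)^2\le 2\sum_{\text{path}}(|\psi_{j+1}|^2+|\psi_{j-1}|^2)\le 4h^{-1}\|\psi\|_{0,h}^2$; both partial sums are dominated by the corresponding full-period sums precisely because the path indices are distinct. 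The powers of $h$ collapse and leave $|\psi_p|^2-|\psi_q|^2\le 4\,\|\psi\|_{0,h}\,|\psi|_{1,h}$.

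Finally I would combine the two estimates and absorb the cross term by Young's inequality $4\,\|\psi\|_{0,h}\,|\psi|_{1,h}\le\frac{1}{4{\sf L}}\|\psi\|_{0,h}^2+16\,{\sf L}\,|\psi|_{1,h}^2$, obtaining $|\psi_p|^2\le\frac{9}{4{\sf L}}\|\psi\|_{0,h}^2+16\,{\sf L}\,|\psi|_{1,h}^2\le\frac{9+16\,{\sf L}^2}{{\sf L}}\,\|\psi\|_{1,h}^2$; taking square roots gives \eqref{periodicSobo}. I expect the main obstacle to be the parity/orbit bookkeeping forced by the central difference: one must guarantee that the small-value reference node lies on the same step-two orbit as the maximizer and that the telescoping path uses distinct center indices. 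This is exactly where the hypothesis $J\ge3$ enters (ruling out the degenerate cases $J\le2$, where $\partial_h\equiv0$) and where the factor $\tfrac{2}{{\sf L}}$, rather than $\tfrac{1}{{\sf L}}$, in the reference estimate comes from, ultimately producing the constant $9+16\,{\sf L}^2$.
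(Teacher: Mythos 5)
Your proof is correct, and it reaches the paper's exact constant by a genuinely different transport mechanism. The paper also exploits the step-two (parity) structure forced by the central difference $\partial_h$, but it telescopes the \emph{first} power: for a maximizing index $m$ it writes $\psi_m=\psi_{\kappa}+2h\sum_{\ell}\partial_h\psi_{2\ell}$, applies the triangle inequality, sums this over \emph{all} starting nodes $\kappa$ of the same parity, and converts the resulting $\ell^1$ sums into norms by Cauchy--Schwarz; a case analysis on the cardinality of the odd/even index classes for $J$ even or odd produces the additive bounds $|\psi|_{\infty,h}\le\tfrac{2}{\sqrt{\sf L}}\,\|\psi\|_{0,h}+4\,\sqrt{\sf L}\,|\psi|_{1,h}$ ($m$ odd) and $|\psi|_{\infty,h}\le\tfrac{3}{\sqrt{\sf L}}\,\|\psi\|_{0,h}+4\,\sqrt{\sf L}\,|\psi|_{1,h}$ ($m$ even), the worse of which yields $\sqrt{9+16\,{\sf L}^2}/\sqrt{\sf L}$. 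You instead telescope the \emph{squared} modulus through the identity $|\psi_{j+1}|^2-|\psi_{j-1}|^2={\mathrm{Re}}\big[(\psi_{j+1}-\psi_{j-1})\,\overline{(\psi_{j+1}+\psi_{j-1})}\big]$ (the same polarization identity underlying the proof of \eqref{BPROP_I} and \eqref{BPROP_III} in Lemma~\ref{Lemma2}), select a single below-average reference node $q$ on the step-two orbit $S$ of the maximizer, and close with one Cauchy--Schwarz along the path plus a Young split. Your bookkeeping is sound: $|S|\ge J/2$ in both parities, the shorter path has at most $\lceil |S|/2\rceil\le J/2$ steps so its centers are pairwise distinct mod $J$, and each node occurs at most twice among the endpoints $j\pm1$ along a simple path, which is exactly what your bounds $\sum_{\text{path}}|\partial_h\psi_j|^2\le h^{-1}|\psi|_{1,h}^2$ and $\sum_{\text{path}}(|\psi_{j+1}|+|\psi_{j-1}|)^2\le 4\,h^{-1}\|\psi\|_{0,h}^2$ require; I verified the arithmetic through $\tfrac{2}{\sf L}+\tfrac{1}{4{\sf L}}=\tfrac{9}{4{\sf L}}$ and $\max\{\tfrac{9}{4{\sf L}},16\,{\sf L}\}\le\tfrac{9+16\,{\sf L}^2}{\sf L}$, and it checks out. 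What your route buys is the sharper multiplicative intermediate estimate $|\psi_p|^2\le\tfrac{2}{\sf L}\,\|\psi\|_{0,h}^2+4\,\|\psi\|_{0,h}\,|\psi|_{1,h}$ --- a discrete Gagliardo--Nirenberg-type interpolation inequality, strictly stronger than \eqref{periodicSobo}, from which the stated bound follows by a tunable Young inequality; the paper's route is more elementary (no complex identity, only the triangle inequality) but commits to the additive form from the outset, and its explicit case split over ${\mathrm{card}}(A)$, ${\mathrm{card}}(B)$ is replaced in your argument by the single orbit-size observation. One cosmetic caveat: your parenthetical that $J\ge3$ enters by ruling out $\partial_h\equiv0$ is not really where the hypothesis bites in your argument --- for $J=2$ your own averaging step already gives $|\psi_p|^2\le\tfrac{2}{\sf L}\,\|\psi\|_{0,h}^2$ with an empty path, so your proof in fact covers that degenerate case too --- but this is a stray remark, not a gap.
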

%
%
%
%
\begin{proof}
Let $\psi\in\dspace$. It is easily seen that there exists $m\in\{J+1,\dots,2J\}$
such that $|\psi_m|=|\psi|_{\infty,h}$. Then, we consider the following cases:
\par\noindent\vskip0.2truecm\par
{\sc Case 1}: $m$ is odd, i.e. there exists ${\widetilde m}\in{\mathbb N}$ such that
$m=2\,{\widetilde m}+1$.
\par\noindent
Let $A:=\left\{\ell\in{\mathbb N}:\, \ell\leq J,\,\,\, \ell\equiv1 \mod 2\right\}$.
Then, for $\kappa\in A$ there exists $\rho(\kappa)\in{\mathbb N}$ such that
$\kappa=2\rho(\kappa)-1$. Thus, we have
\begin{equation*}
\begin{split}
|\psi_m|=&\,\left|\,\psi_{\kappa}
+2\,h\,\sum_{\ell=\rho(\kappa)}^{{\widetilde m}}\partial_h\psi_{2\ell}\right|\\
\leq&\,|\psi_{\kappa}|+2\,h\,\sum_{\ell=2\rho(\kappa)}^{2{\widetilde m}}|\partial_h\psi_{\ell}|
\quad\forall\,\kappa\in A,\\
\end{split}
\end{equation*}
which yields
\begin{equation}\label{discSob1}
\begin{split}
{\mathrm{card}}(A)\,|\psi|_{\infty,h}\leq&\,\sum_{\kappa\in
A}|\psi_{\kappa}|+2h\,\sum_{\kappa\in A}\left(\,
\sum_{\ell=\kappa+1}^{m-1}|\partial_{h}\psi_{\ell}|\,\right)\\
\leq&\,\sum_{\kappa\in A}|\psi_{\kappa}|+2h\,\mathrm{card}(A)
\sum_{\ell=2}^{m-1}|\partial_{h}\psi_{\ell}|.\\
\end{split}
\end{equation}
Observing  that
\begin{equation*}
{\mathrm{card}}(A)=\left\{
\begin{aligned}
&\tfrac{J}{2}\hskip1.0truecm\text{\rm if}\ \ J \ \ \text{\rm is even}\\
&\tfrac{J+1}{2}\hskip0.65truecm\text{\rm if}\ \ J \ \ \text{\rm is odd}\\
\end{aligned}
\right.,
\end{equation*}
and using \eqref{discSob1} along with the Cauchy-Schwarz inequality,
we obtain
\begin{equation}\label{Sobo1}
\begin{split}
|\psi|_{\infty,h}\leq&\,\tfrac{2}{J}\,\sum_{\ell\in A}
|\psi_{\ell}|+2h\,\sum_{\ell=1}^{\ssy 2J}|\partial_h\psi_{\ell}|\\
\leq&\,\tfrac{2}{\sf L}\,h\,\sum_{\ell=1}^{\ssy J}|\psi_{\ell}|
+4\,h\,\sum_{\ell=1}^{\ssy J}|\partial_h\psi_{\ell}|\\
\leq&\,\tfrac{2}{\sqrt{\sf L}}\,\|\psi\|_{0,h}+4\,\sqrt{\sf L}\,|\psi|_{1,h}\\
\leq&\,2\,\tfrac{\sqrt{1+4\,{\sf L}^2}}{\sqrt{\sf L}}\,\,\|\psi\|_{1,h}.\\
\end{split}
\end{equation}
\par\noindent\smallskip\par
{\sc Case 2}: $m$ is even, i.e. there exists
$\widetilde{m}\in{\mathbb N}$ such that $m=2\,{\widetilde m}$.
\par\noindent
Let $B:=\left\{\ell\in{\mathbb N}:\, \ell\leq J,\,\,\, \ell\equiv0 \mod 2\right\}$.
Then, for $\kappa\in B$ there exists  $\rho(\kappa)\in{\mathbb N}$ such that $\kappa=2\rho(\kappa)$.
First we observe that
\begin{equation*}
\begin{split}
|\psi_m|=&\,\left|\psi_{\kappa}
+2h\sum_{\ell=\rho(\kappa)}^{{\widetilde{m}}-1}
\partial_h\psi_{2\ell+1}\right|\\
\leq&\,|\psi_{\kappa}|+2\,h\,\sum_{\ell=2\rho(\kappa)+1}^{2{\widetilde{m}}-1}
|\partial_h\psi_{\ell}|\quad\forall\,\kappa\in B.\\
\end{split}
\end{equation*}
Then, we sum over $\kappa\in B$ to get
\begin{equation}\label{DiscSob5}
\begin{split}
\mathrm{card}(B)\,|\psi|_{\infty,h}\leq&\,
\sum_{\kappa\in B}|\psi_{\kappa}|+2\,h\,\sum_{\kappa\in B}
\left(\,\sum_{\ell=\kappa+1}^{m-1}|\partial_h\psi_{\ell}|\,\right)\\
\leq&\,
\sum_{\kappa\in B}|\psi_{\kappa}|+2\,h\,\mathrm{card}(B)
\sum_{\ell=3}^{m-1}|\partial_h\psi_{\ell}|.\\
\end{split}
\end{equation}
Observing that
\begin{equation*}
\mathrm{card}(B)=\left\{
\begin{aligned}
&\tfrac{J}{2}\hskip1.0truecm\mbox{if $J$ is even}\\
&\tfrac{J-1}{2}\hskip0.65truecm\mbox{if $J$ is odd}\\
\end{aligned}
\right.,
\end{equation*}
\eqref{DiscSob5} yields
\begin{equation}\label{Sobo2}
\begin{split}
|\psi|_{\infty,h}\leq&\,\tfrac{2}{J-1}\,\sum_{\kappa\in B}|\psi_{\kappa}|
+2\,h\,\sum_{\ell=1}^{\ssy 2J}|\partial_h\psi_{\ell}|\\
\leq&\,\tfrac{3}{\sf L}\,h\,\sum_{\ell=1}^{\ssy J}|\psi_{\ell}|
+4\,h\,\sum_{\ell=1}^{\ssy J}|\partial_h\psi_{\ell}|\\\
\leq&\,\tfrac{3}{\sqrt{\sf L}}\,\|\psi\|_{0,h}+4\,\sqrt{{\sf L}}\,|\psi|_{1,h}\\\
\leq&\,\tfrac{\sqrt{9+16\,{\sf L}^2}}{\sqrt{\sf L}}\,\|\psi\|_{1,h}.\\
\end{split}
\end{equation}
\par
The desired inequality \eqref{periodicSobo} is a simple consequence of \eqref{Sobo1} and \eqref{Sobo2}.
\end{proof}
%
%
%
\begin{lem}
The following discrete inverse inequality holds
\begin{equation}\label{maxL2}
|\psi|_{\infty,h}\leq\,h^{-\frac{1}{2}}\,\|\psi\|_{0,h}
\quad\forall\,\psi\in\dspace.
\end{equation}
\end{lem}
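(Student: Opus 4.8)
The plan is to reduce \eqref{maxL2} to the elementary fact that a single nonnegative term of a finite sum cannot exceed the whole sum. First I would fix $\psi\in\dspace$ and, recalling that $|\cdot|_{\infty,h}$ is defined as $\max_{1\le j\le J}|\psi_j|$, choose an index $m\in\{1,\dots,J\}$ for which the maximum is attained, i.e. $|\psi_m|=|\psi|_{\infty,h}$. Such an $m$ exists because the maximum runs over the finite set $\{1,\dots,J\}$, and periodicity of $\psi$ guarantees that this one period captures all distinct values.

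Next I would exploit the definition $\|\psi\|_{0,h}^2=(\psi,\psi)_{0,h}=h\sum_{j=1}^{\ssy J}|\psi_j|^2$. Since every summand $|\psi_j|^2$ is nonnegative, the particular term $|\psi_m|^2$ is dominated by the entire sum, which gives
\begin{equation*}
|\psi|_{\infty,h}^2=|\psi_m|^2\leq\sum_{j=1}^{\ssy J}|\psi_j|^2
=\tfrac{1}{h}\,\|\psi\|_{0,h}^2.
\end{equation*}
Taking square roots then yields $|\psi|_{\infty,h}\leq h^{-\frac12}\,\|\psi\|_{0,h}$, which is exactly \eqref{maxL2}, and since $\psi\in\dspace$ was arbitrary the claim follows for all $\psi$.

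There is no real obstacle here: the estimate is a pure bookkeeping statement about how the weight $h$ enters the discrete $L^2$-norm, and the only thing to be careful about is tracking the factor $h$ correctly so that the power $h^{-1/2}$ comes out rather than, say, $h^{1/2}$ or $h^{-1}$. In particular, unlike \lemref{Sobemb1}, no summation-by-parts, telescoping, or Cauchy--Schwarz argument is needed; the bound is sharp and is attained, for instance, by a $\psi$ supported on a single node of each period.
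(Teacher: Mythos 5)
Your proposal is correct and coincides with the paper's own argument: both select a maximizing index $m\in\{1,\dots,J\}$ and bound $h\,|\psi_m|^2$ by the full sum $\|\psi\|_{0,h}^2=h\sum_{j=1}^{J}|\psi_j|^2$, then take square roots. No differences worth noting.
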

%
%
%
%
\begin{proof}
Let $\psi\in\dspace$ and $m\in\{1,\dots,J\}$ such that $|\psi|_{\infty,h}=|\psi_m|$. Then,
we have
\begin{equation*}
\|\psi\|_{0,h}^2=h\sum_{\ell=1}^{\ssy J}|\psi_{\ell}|^2\ge\,h\,|\psi_m|^2,
\end{equation*}
which easily yields \eqref{maxL2}.
\end{proof}
\section{A Modified Finite Difference Scheme}\label{Section3}
%
We will carry out the convergence analysis of the proposed (FDM)
by investigating the convergence of a properly defined Modified Finite Difference
Scheme (MFDS) that derives non-computable finite difference approximations of
the exact solution $\phi$ to the problem \eqref{HirotaPeriodic}-\eqref{HirotaPeriodic_b}
(cf. \cite{Georgios1}).
In particular, we will construct the (MFDS) using an operational mollification of the nonlinear
terms in (FDM), which is based on a given real parameter $\lambda>0$ and the norm
$\|\cdot\|_{1,\infty,h}$ on $\dspace$. The goal of this construction is to provide the
(MFDS) with the following key property:
`when the (MFDS) approximations  have $\|\cdot\|_{1,\infty,h}-$distance from the exact
solution to the problem lower than
$\lambda$, then they are also (FDM) approximations'.
%
\subsection{Constructing an Operational Mollification}\label{section31}
%
\par
For $\lambda>0$, let $\xi(\lambda;\cdot):{\mathbb
R}\rightarrow[0,1]$ be a continuous function defined by
\begin{equation}\label{xi_function}
\xi(\lambda;x):=\left\{
\begin{aligned}
&1,\hskip1.0truecm
\mbox{if}\ \ x\leq \lambda,\\
&\tfrac{2\lambda-x}{\lambda},\hskip0.4truecm
\mbox{if}\ \ x\in (\lambda,2\lambda],\\
&0,\hskip1.0truecm\mbox{if}\ \ x> 2\lambda,\\
\end{aligned}
\right.\quad\forall\,x\in{\mathbb R}.
\end{equation}
%
%
Then, for $\lambda>0$ and $t\in[0,T]$, we construct an operational mollifier
${\mathfrak m}(\lambda,t;\cdot):\dspace\rightarrow\dspace$ by
\begin{equation}\label{G_function}
\begin{split}
{\mathfrak m}(\lambda,t;w):=&\,w\,
\,\xi\big(\lambda;\|w-\Lambda_h(\phi(t,\cdot))\|_{1,\infty,h}\big)\\
&\quad
+\Lambda_h(\phi(t,\cdot))\,\left[1-\xi\big(\lambda;
\|w-\Lambda_h(\phi(t,\cdot))\|_{1,\infty,h}\big)\,\right]\quad\forall\,w\in\dspace,\\
\end{split}
\end{equation}
where $\phi$ is the solution to the problem
\eqref{HirotaPeriodic}.
%
%
\par
In the lemmas below, we establish some usefull properties of the map
${\mathfrak m}(\lambda,t;\cdot)$.
%
%
\begin{lem}
It holds that
\begin{equation}\label{mol00}
{\mathfrak m}(\lambda,t;v)=v\quad\forall\,v
\in{\mathfrak B}_{\sf c}\left(\Lambda_h(\phi(t,\cdot)),\lambda;\|\cdot\|_{1,\infty,h}\right),
\quad\forall\,\lambda>0,\quad\forall\,t\in[0,T],
\end{equation}
%
%
and
\begin{equation}\label{mol2}
\|{\mathfrak m}(\lambda,t;w)\|_{1,\infty,h}<3\lambda
\quad\forall\,w\in\dspace,\quad\forall\,\lambda\ge\lambda_{\star},
\quad\forall\,t\in[0,T],
\end{equation}
where $\lambda_{\star}:=\max\limits_{0\leq{\ell}\leq{1}}
\,\left(\max_{\ssy{[0,1]\times[0,T]}}|\partial_x^{\ell}\phi|\right)$.
\end{lem}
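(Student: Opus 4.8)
The plan is to exploit the convex-combination structure of the mollifier. Writing $P:=\Lambda_h(\phi(t,\cdot))$, $d:=\|w-P\|_{1,\infty,h}$, and abbreviating $\xi:=\xi(\lambda;d)$, the definition \eqref{G_function} reads ${\mathfrak m}(\lambda,t;w)=\xi\,w+(1-\xi)\,P$, which is a convex combination of $w$ and $P$ because $\xi\in[0,1]$ by \eqref{xi_function}. For \eqref{mol00}, if $v\in{\mathfrak B}_{\sf c}(P,\lambda;\|\cdot\|_{1,\infty,h})$ then $d=\|v-P\|_{1,\infty,h}\le\lambda$, so the first branch of \eqref{xi_function} gives $\xi=1$ and hence ${\mathfrak m}(\lambda,t;v)=1\cdot v+0\cdot P=v$. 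This settles the first assertion at once.

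For \eqref{mol2} I first isolate the auxiliary bound $\|P\|_{1,\infty,h}\le\lambda_{\star}$. The $|\cdot|_{\infty,h}$ contribution is immediate from the definition of $\lambda_{\star}$ in the case $\ell=0$, since $|P|_{\infty,h}=\max_{1\le j\le J}|\phi(t,x_j)|$. For the $|\cdot|_{1,\infty,h}$ contribution I use the fundamental theorem of calculus to write $\partial_h P_j=\tfrac{1}{2h}\big(\phi(t,x_{j+1})-\phi(t,x_{j-1})\big)=\tfrac{1}{2h}\int_{x_{j-1}}^{x_{j+1}}\phi_x(t,s)\,ds$, so that $|\partial_h P_j|\le\max|\phi_x|\le\lambda_{\star}$ by the case $\ell=1$. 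Taking the maximum over $j$ and then over the two seminorms gives $\|P\|_{1,\infty,h}\le\lambda_{\star}\le\lambda$ whenever $\lambda\ge\lambda_{\star}$.

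With this in hand I estimate the mollifier directly. Since $\|\cdot\|_{1,\infty,h}$ is a genuine norm and $\xi,1-\xi\ge0$, the triangle inequality gives $\|{\mathfrak m}(\lambda,t;w)\|_{1,\infty,h}\le\xi\,\|w\|_{1,\infty,h}+(1-\xi)\,\|P\|_{1,\infty,h}$, and I bound $\|w\|_{1,\infty,h}\le d+\|P\|_{1,\infty,h}$. A short case split on $d$ finishes the argument: if $\xi=0$ the output is $P$ and its norm is $\le\lambda_{\star}\le\lambda$; if $\xi>0$ then $d\le 2\lambda$ and the elementary one-variable bound $\xi\,d=\tfrac{(2\lambda-d)d}{\lambda}\le\lambda$ collapses the estimate to $\|{\mathfrak m}(\lambda,t;w)\|_{1,\infty,h}\le\xi\,d+\|P\|_{1,\infty,h}\le\lambda+\lambda_{\star}\le 2\lambda<3\lambda$.

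The only point that is not pure bookkeeping is the uniform control of $\partial_h P$, which is exactly where the first-derivative term in the definition of $\lambda_{\star}$ is used; the remainder is the triangle inequality together with the maximization of $\xi\,d$. Note that the factor $3$ in \eqref{mol2} is deliberately generous — the argument in fact delivers the bound $2\lambda$ — so the strict inequality costs nothing.
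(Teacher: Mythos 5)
Your proof is correct and follows essentially the same route as the paper: \eqref{mol00} comes straight from the first branch of \eqref{xi_function}, and \eqref{mol2} from a case split on $d=\|w-\Lambda_h(\phi(t,\cdot))\|_{1,\infty,h}$ relative to $2\lambda$, combined with the convex-combination structure of \eqref{G_function} and the bound $\|\Lambda_h(\phi(t,\cdot))\|_{1,\infty,h}\leq\lambda_{\star}\leq\lambda$ (which you verify explicitly via the fundamental theorem of calculus, a step the paper leaves implicit). Your only deviation is quantitative, not methodological: by retaining the weight $\xi$ and maximizing $\xi\,d\leq\lambda$ through $(2\lambda-d)\,d\leq\lambda^2$, you sharpen the paper's estimate $2\lambda+\lambda_{\star}<3\lambda$ to $\lambda+\lambda_{\star}\leq 2\lambda$, which is a tidy improvement within the same argument.
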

%
%
%
%
\begin{proof}
Let $t\in[0,T]$, $\lambda>0$ and
$v\in{\mathfrak B}_{\sf c}\left(\Lambda_h(\phi(t,\cdot)),\lambda;\|\cdot\|_{1,\infty,h}\right)$.
Then, it holds that
\begin{equation*}
\|v-\Lambda_h(\phi(t,\cdot))\|_{1,\infty,h}\leq\lambda
\end{equation*}
which, along with \eqref{xi_function}, yields
\begin{equation}\label{traka_mol}
\xi(\lambda; \|v-\Lambda_h(\phi(t,\cdot))\|_{1,\infty,h})=1.
\end{equation}
The equality \eqref{mol00} follows easily combining \eqref{G_function}
and \eqref{traka_mol}.
\par
Now, let us assume that $\lambda\ge\lambda_{\star}$. Then, we use
\eqref{G_function} to get
\begin{equation}\label{mol4}
\begin{split}
\|{\mathfrak m}(\lambda,t;w)\|_{1,\infty,h}\leq&\,
\|w\|_{1,\infty,h}\,\,\,\xi\big(\lambda;
\|w-\Lambda_h(\phi(t,\cdot))\|_{1,\infty,h}\big)\\
&\quad +\|\Lambda_h(\phi(t,\cdot))\|_{1,\infty,h}
\,\,\,\left[1-\xi\left(\lambda;
\|w-\Lambda_h(\phi(t,\cdot))\|_{1,\infty,h}\right)\right].\\
\end{split}
\end{equation}
If $\|w-\Lambda_h(\phi(t,\cdot))\|_{1,\infty,h}\ge2\lambda$, then
\eqref{xi_function} and \eqref{mol4} yield
\begin{equation}\label{mol5}
\begin{split}
\|{\mathfrak m}(\lambda,t;w)\|_{1,\infty,h}
\leq&\,\|\Lambda_h(\phi(t,\cdot))\|_{1,\infty,h}\\
\leq&\,\lambda_{\star}\\
\leq&\,\lambda.\\
\end{split}
\end{equation}
If $\|w-\Lambda_h(\phi(t,\cdot))\|_{1,\infty,h}<2\lambda$, then
\begin{equation}\label{mol6}
\begin{split}
\|{\mathfrak m}(\lambda,t;w)\|_{1,\infty,h}
\leq&\,\max\left\{\,\|w\|_{1,\infty,h},
\|\Lambda_h(\phi(t,\cdot))\|_{1,\infty,h}\,\right\}\\
\leq&\,\max\left\{\,\|w-\Lambda_h(\phi(t,\cdot))\|_{1,\infty,h}
+\|\Lambda_h(\phi(t,\cdot))\|_{1,\infty,h},
\|\Lambda_h(\phi(t,\cdot))\|_{1,\infty,h}\,\right\}\\
<&\,2\,\lambda+\|\Lambda_h(\phi(t,\cdot))\|_{1,\infty,h}\\
<&\,2\lambda+\lambda_{\star}\\
<&\,3\lambda.\\
\end{split}
\end{equation}
Thus, \eqref{mol2} follows easily from \eqref{mol5} and \eqref{mol6}.
\end{proof}
%
%
%
%
\begin{lem}
Let $\nu$ be a seminorm on $\dspace$. Then,
it holds that
\begin{equation}\label{G_Lip}
\nu\left(\,{\mathfrak m}(\lambda,t;w)-\Lambda_h(\phi(t,\cdot))\,\right)
\leq\,\nu\left(\,w-\Lambda_h(\phi(t,\cdot))\,\right)\quad\forall\,w\in\dspace,
\quad\forall\,\lambda>0,\quad\forall\,t\in[0,T].
\end{equation}
\end{lem}
%
%
%
%
\begin{proof}
Let $t\in[0,T]$, $\lambda>0$ and $w\in\dspace$. Then, 
\eqref{G_function} and \eqref{xi_function} yield
\begin{equation*}
\begin{split}
\nu\left(\,{\mathfrak m}(\lambda,t;w)-\Lambda_h(\phi(t,\cdot))\,\right)
=&\,\nu\left(\,w-\Lambda_h(\phi(t,\cdot))\,\right)\,\,\,
\xi\left(\lambda;\|w-\Lambda_h(\phi(t,\cdot))\|_{1,\infty,h}\right)\\
\leq&\,\nu\left(\,w-\Lambda_h(\phi(t,\cdot))\,\right).\\
\end{split}
\end{equation*}
\end{proof}
%
%
%
%
%
%
\subsection{The Modified Finite Difference Scheme (MFDS)}\label{Here_Is_MFD}
%
Here, we introduce a modified finite difference scheme which, for
$\lambda>0$, derives non-computable approximations
$(S^n(\lambda))_{n=0}^{\ssy N}\subset\dspace$ 
of the solution $\phi$ to \eqref{HirotaPeriodic}-\eqref{HirotaPeriodic_b},
following the steps below:
\par
{\tt Step A}: First, set
\begin{equation}\label{modfidifin}
S^0(\lambda):=\phi^0.
\end{equation}
\par
{\tt Step B}: Find $S^{1}(\lambda)\in\dspace$ such that
\begin{equation}\label{modscheme1a}
\begin{split}
\tfrac{S^1(\lambda)-S^0(\lambda)}{\tau}= &\, {\rm
i}\,\rho\,\Delta_h\left(\tfrac{S^1(\lambda)+S^0(\lambda)}{2}\right)
-\sigma\,\partial_h\Delta_h\left(\tfrac{S^1(\lambda)+S^0(\lambda)}{2}\right)\\
&\,+{\rm i}\,\delta\,\left[\,\left|{\mathfrak m}\left(\lambda,t_0;S^0(\lambda)\right)\right|^2 \otimes
\left(\tfrac{S^1(\lambda)+S^0(\lambda)}{2}\right)\,\right]\\
&\,-3\,\alpha\, {\mathcal A}_h\left[\,\left|{\mathfrak m}\left(\lambda,t_0;S^0(\lambda)\right)\right|^2
\otimes\partial_h\left(\tfrac{S^1(\lambda)+S^0(\lambda)}{2}\right)\,\right]+F^{\half}.\\
\end{split}
\end{equation}
\par
{\tt Step C}: For $n=1,\dots,N-1$, find $S^{n+1}(\lambda)\in\dspace$ such
that
\begin{equation}\label{modscheme1}
\begin{split}
\tfrac{S^{n+1}(\lambda)-S^{n-1}(\lambda)}{2\tau}=&\,
{\rm i}\,\rho\,\Delta_h\left(\tfrac{S^{n+1}(\lambda)+S^{n-1}(\lambda)}{2}\right)
-\sigma\,\partial_h\Delta_h\left(\tfrac{S^{n+1}(\lambda)+S^{n-1}(\lambda)}{2}\right)\\
&\,+{\rm i}\,\delta\,\left[\,\left|{\mathfrak m}\left(\lambda,t_n;S^n(\lambda)\right)\right|^2
\otimes\left(\tfrac{S^{n+1}(\lambda)+S^{n-1}(\lambda)}{2}\right)\,\right]\\
&\,-3\,\alpha
\,{\mathcal A}_h\left[\,\left|{\mathfrak m}\left(\lambda,t_n;S^n(\lambda)\right)\right|^2
\otimes\partial_h\left(\tfrac{S^{n+1}(\lambda)+S^{n-1}(\lambda)}{2}\right)\right]+F^n.\\
\end{split}
\end{equation}
%
%
\subsection{Existence and uniqueness of the (MFDS) approximations}
%
Below we show that, if $\tau$ is small enough, then the (MFDS)
approximations are well-defined.
%
%
%
%
%
%
%
%
%
\begin{proposition}\label{ModExist}
Let $\lambda_{\star}:=\max_{0\leq{\ell}\leq1}
\,\left(\,\max_{\ssy{[0,1]\times[0,T]}}|\partial_x^{\ell}\phi|\right)$
and $\lambda\ge\lambda_{\star}$.
Then, there exist a constant ${\sf C}_1>0$, depending only on
$\alpha$, such that: if $\tau\,{\sf C}_1\,\lambda^2<1$,
then modified finite difference approximations \eqref{modfidifin}-\eqref{modscheme1}
are well-defined.
\end{proposition}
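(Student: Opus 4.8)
The plan is to show that each step of the (MFDS) recursion amounts to solving a linear system of the form $(I - \tau A_n)\,U = b_n$ (for Step~B) or $(I - \tau A_n)\,S^{n+1} = b_n$ (for Step~C), where $A_n$ is a linear operator on $\dspace$ built from $\Delta_h$, $\partial_h\Delta_h$, and the two mollified nonlinear terms in which the coefficient $|\mathfrak{m}(\lambda,t_n;S^n(\lambda))|^2$ is \emph{known} (it depends only on previously computed data). Since $\dspace$ is finite-dimensional ($J$-dimensional over $\cset$), existence and uniqueness of the solution are equivalent to injectivity of $I-\tau A_n$, so it suffices to prove that the only solution of the homogeneous equation $(I-\tau A_n)U=0$ is $U=0$ whenever $\tau{\sf C}_1\lambda^2<1$.

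First I would fix the recursion level and isolate the homogeneous operator. Writing $g:=|\mathfrak{m}(\lambda,t_n;S^n(\lambda))|^2\in\dspace$, which is real and nonnegative, the homogeneous equation (after moving the half-sum factors) reads
\begin{equation*}
U = \tau\left[\,{\rm i}\rho\,\Delta_h U - \sigma\,\partial_h\Delta_h U + {\rm i}\delta\,(g\otimes U) - 3\alpha\,{\mathcal A}_h(g\otimes\partial_h U)\,\right].
\end{equation*}
I would take the discrete inner product $(\cdot,U)_{0,h}$ of this identity with $U$ and then take real parts. The key is that the first three operators on the right contribute \emph{zero} to the real part: by \eqref{PROP_II} the term $(\Delta_h U,U)_{0,h}=-\|\delta_h U\|_{0,h}^2$ is real, so $\mathrm{Re}\big({\rm i}\rho(\Delta_h U,U)_{0,h}\big)=0$; by \eqref{PROP_III} we have $\mathrm{Re}(\partial_h\Delta_h U,U)_{0,h}=0$; and since $g$ is real, $(g\otimes U,U)_{0,h}=\sum_j g_j|U_j|^2$ is real, whence $\mathrm{Re}\big({\rm i}\delta(g\otimes U,U)_{0,h}\big)=0$. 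Therefore taking real parts annihilates everything except the $\alpha$-term, leaving
\begin{equation*}
\|U\|_{0,h}^2 = -3\alpha\,\tau\,\mathrm{Re}\big({\mathcal A}_h(g\otimes\partial_h U),U\big)_{0,h}.
\end{equation*}

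Next I would invoke \eqref{PROP_VII} with $z^2\to g$ replaced appropriately—more precisely, apply the identity $\mathrm{Re}\big({\mathcal A}_h(g\otimes\partial_h U),U\big)_{0,h}=-\tfrac12(\partial_h g,|U|^2)_{0,h}$ (the same computation as \eqref{PROP_VII}, valid for any real $g$). This converts the right-hand side into $\tfrac{3\alpha\tau}{2}(\partial_h g,|U|^2)_{0,h}$, which I then bound by $\tfrac{3|\alpha|\tau}{2}|\partial_h g|_{\infty,h}\,\|U\|_{0,h}^2$. The crucial estimate is $|\partial_h g|_{\infty,h}=|\partial_h(|\mathfrak{m}|^2)|_{\infty,h}\le 2|\mathfrak{m}|_{\infty,h}\,|\mathfrak{m}|_{1,\infty,h}$ by \eqref{BPROP_I}, and then by the mollifier bound \eqref{mol2} we have $|\mathfrak{m}|_{\infty,h},|\mathfrak{m}|_{1,\infty,h}\le\|\mathfrak{m}\|_{1,\infty,h}<3\lambda$, giving $|\partial_h g|_{\infty,h}<18\lambda^2$. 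Collecting constants yields $\|U\|_{0,h}^2\le 27|\alpha|\,\tau\,\lambda^2\,\|U\|_{0,h}^2$, so setting ${\sf C}_1:=27|\alpha|$ (or any admissible constant depending only on $\alpha$), the hypothesis $\tau{\sf C}_1\lambda^2<1$ forces $\|U\|_{0,h}=0$, i.e.\ $U=0$. This proves injectivity, hence well-posedness, at every level; Step~A is trivial, and Step~B is handled identically with $2\tau$ replaced by $\tau$ (which only improves the constant). \textbf{The main obstacle} is purely bookkeeping: ensuring the mollifier bound \eqref{mol2} (which requires $\lambda\ge\lambda_\star$) is available to control $|\partial_h g|_{\infty,h}$ uniformly in $n$, and tracking the exact constant so that it depends only on $\alpha$; the spectral/energy structure that kills the three imaginary-contribution terms is the clean part of the argument.
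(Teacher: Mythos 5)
Your proof is correct and follows essentially the same route as the paper: the paper also reduces each step to the invertibility of a linear operator ${\sf T}(t,\zeta,\chi)=2\,{\mathrm{I}}+{\sf Q}(t,\zeta,\chi)$ on the finite-dimensional space $\dspace$, kills the $\rho$-, $\sigma$- and $\delta$-terms in ${\mathrm{Re}}({\sf T}v,v)_{0,h}$ exactly as you do via \eqref{PROP_II}, \eqref{PROP_III} and the realness of $|{\mathfrak m}|^2$, and controls the remaining $\alpha$-term through \eqref{PROP_VII}, \eqref{BPROP_I} and \eqref{mol2}, arriving at the same constant ${\sf C}_1=27\,|\alpha|$. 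The only cosmetic difference is that the paper carries a parameter $\zeta\in\{1,2\}$ to treat Steps B and C uniformly, where you instead observe that the first step carries a smaller factor of $\tau$ and so only improves the constant.
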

%
%
%
%
\begin{proof}
Let $t\in[0,T]$, $\zeta>0$ and $\chi\in\dspace$.
Then, we define linears operators
${\sf Q}(t,\zeta,\chi):\dspace\rightarrow\dspace$ and
${\sf T}(t,\zeta,\chi):\dspace\rightarrow\dspace$ by
\begin{equation*}\label{Odefinition}
\begin{split}
{\sf T}(t,\zeta,\chi)v:=&\,2\,v+{\sf Q}(t,\zeta,\chi)v
\quad\forall\,v\in\dspace,\\
\end{split}
\end{equation*}
where
\begin{equation*}
\begin{split}
{\sf Q}(t,\zeta,\chi)v:=&\,-{\rm i}\,\rho\,\tau\,\zeta\,\Delta_hv 
+\sigma\,\tau\,\zeta\,\partial_h\Delta_hv
-{\rm i}\,\delta\,\tau\,\zeta\,\left[\,
\left|{\mathfrak m}(\lambda,t;\chi)\right|^2\otimes v\,\right]\\
&\hskip1.0truecm
+3\,\alpha\,\tau\,\zeta\,{\mathcal A}_h\left[\,
\left|{\mathfrak m}(\lambda,t;\chi)\right|^2\otimes\partial_hv\,\right]
\quad\forall\,v\in\dspace.\\
\end{split}
\end{equation*}
Using \eqref{PROP_II}, \eqref{PROP_III}, \eqref{PROP_VII}, the
Cauchy-Schwarz inequality, \eqref{BPROP_I} and \eqref{mol2}, we
have
\begin{equation}\label{doveE1}
\begin{split}
{\mathrm{Re}}\big({\sf T}(t,\zeta,\chi)v,v\big)_{0,h}=&\,2\,\|v\|_{0,h}^2
-\tfrac{3}{2}\,\alpha\,\tau\,\zeta\,{\mathrm{Re}}\left(
\partial_h(|{\mathfrak m}(\lambda,t;\chi)|^2),|v|^2\right)_{0,h}\\
\ge&\,2\,\|v\|_{0,h}^2
-\tfrac{3}{2}\,|\alpha|\,\tau\,\zeta\,\|v\|_{0,h}^2
\,\,\,\left|\partial_h\left(|{\mathfrak m}(\lambda,t;\chi)|^2\right)\right|_{\infty,h}\\
\ge&\,2\,\|v\|_{0,h}^2
-3\,|\alpha|\,\tau\,\zeta\,\|v\|_{0,h}^2
\,\big\|{\mathfrak m}(\lambda,t;\chi)\big\|_{1,\infty,h}^2\\
\ge&\,2\,\|v\|_{0,h}^2\,\left(\,1-\tau\,\zeta\,\tfrac{27}{2}\,|\alpha|\,\lambda^2\,\right)
\quad\forall\,v\in\dspace.\\
\end{split}
\end{equation}
Let $\tfrac{27}{2}\,|\alpha|\,\lambda^2\,\tau\,\zeta<1$ and $v\in{\rm Ker}({\sf T}(t,\zeta,\chi))$.
Then, ${\mathrm{Re}}({\sf T}(t,\zeta,\chi)v,v)_{0,h}=0$,
which, along with \eqref{doveE1}, yields $v=0$.
Thus, ${\rm Ker}({\sf T}(t,\zeta,\chi))=\{0\}$ and
${\sf T}(t,\zeta,\chi)$ is invertible, since $\dspace$ has finite dimension.
\par
Set ${\sf C}_1:=27\,|\alpha|$ and require $\tau\,{\sf C}_1\,\lambda^2<1$. 
Then, according to the discussion above, the element
$S^1(\lambda)={\sf T}^{-1}(t_0,1,\phi^0)\psi^1$
is the solution to \eqref{modscheme1a}, with
$\psi^1:=2\,\phi^0+2\,\tau\,F^{\half}-{\sf Q}(t_0,1,\phi^0)\phi^0$.
%
%
Let $\kappa\in\{1,\dots,N-1\}$,
$S^1(\lambda)$ be the solution to the linear system \eqref{modscheme1a}
and $(S^m(\lambda))_{m=2}^{\kappa}$ be the solutions to the
linear system \eqref{modscheme1} for $n=1,\dots,\kappa-1$, respectively.
Then, the element
$S^{\kappa+1}(\lambda)={\sf T}^{-1}(t_{\kappa},2,S^{\kappa}(\lambda))\psi^{\kappa}$
is a solution of \eqref{modscheme1} for $n=\kappa$, with
$$\psi^{\kappa}:=2\,S^{\kappa-1}(\lambda)+4\,\tau\,F^{\kappa}
-{\sf Q}(t_{\kappa},2,S^{\kappa}(\lambda))S^{\kappa-1}(\lambda).$$
%
%
%
\end{proof}
%
%
\subsection{Consistency of  (MFDS) and (FDM) approximations}
%
%
Let $(\eta^n)_{n=0}^{\ssy N-1}\subset\dspace$ be defined by
\begin{equation}\label{consistency_equations0}
\begin{split}
\tfrac{\phi^1-\phi^0}{\tau}=&\, {\rm
i}\,\rho\,\Delta_h\left(\tfrac{\phi^1+\phi^0}{2}\right)
-3\,\alpha\,{\mathcal A}_h\left[\,|\phi^0|^2
\otimes\partial_h\left(\tfrac{\phi^1+\phi^0}{2}\right)\,\right]\\
&\,-\sigma\,\partial_h\Delta_h\left(\tfrac{\phi^1+\phi^0}{2}\right)
+{\rm i}\,\delta\,\left[\,|\phi^0|^2
\otimes\left(\tfrac{\phi^1+\phi^0}{2}\right)\,\right]+F^{\half}+\eta^0
\end{split}
\end{equation}
and
\begin{equation}\label{consistency_equations}
\begin{split}
\tfrac{\phi^{n+1}-\phi^{n-1}}{2\,\tau}=&\,
{\rm i}\,\rho\,\Delta_h\left(\tfrac{\phi^{n+1}+\phi^{n-1}}{2}\right)
-3\,\alpha\,{\mathcal A}_h\left[\,|\phi^n|^2
\otimes\partial_h\left(\tfrac{\phi^{n+1}+\phi^{n-1}}{2}\right)\,\right]\\
&\,-\sigma\,\partial_h\Delta_h\left(\tfrac{\phi^{n+1}+\phi^{n-1}}{2}\right)
+{\rm i}\,\delta\,\left[\,
|\phi^n|^2\otimes\left(\tfrac{\phi^{n+1}+\phi^{n-1}}{2}\right)\right]+F^n+\eta^n
\end{split}
\end{equation}
for $n=1,\dots,N-1$. Then, assuming enough space and time
regularity for the solution $\phi$ and using the Taylor formula,
we conclude that there exits positive real constants $C_1$
and $C_2$, which are independent of $\tau$ and $h$, such that:
\begin{gather}
\|\eta^0\|_{1,\infty,h}\leq\,C_1\,(h^2+\tau),\label{consistency_bound0}\\
\max_{1\leq{n}\leq{\ssy N-1}} 
\|\eta^n\|_{1,\infty,h}\leq\,C_2\,(h^2+\tau^2).\label{consistency_bound}
\end{gather}
Since the property \eqref{mol00} yields that $|\phi^n|^2=|{\mathfrak m}(\lambda,t_n;\phi^n)|^2$
for $n=0,\dots,N$, the consistency result described above for the (FDM) approximations
is, also, a consistency result for the (MFDS) approximations 
introduced in Section~\ref{Here_Is_MFD}.
%
\subsection{Convergence of the (MFDS) approximations}\label{Conv}
%
In the theorem below, we show convergence of  the (MFDS) approximations
in the discrete $H^1-$norm.
%
%
%
\begin{thm}\label{Conv_Of_Mod}
Let $\mu_{\star}:=\max_{0\leq{\ell}\leq{2}}
\left(\max_{\ssy{[0,1]\times[0,T]}}|\partial_x^{\ell}\phi|\right)$,
$\lambda_{{\sf c}}:=\mu_{\star}+1$ and ${\sf C}_1$
be the constant speficied in Proposition~\ref{ModExist}.
Also, we assume that $\tau\,{\sf C}_1\,\lambda_{{\sf c}}^2<1$
and denote by $({\sf Z}^{\ell})_{\ell=0}^{\ssy N}$
the modified finite difference approximations defined by
\eqref{modfidifin}--\eqref{modscheme1} for
$\lambda=\lambda_{{\sf c}}$, i.e.,
${\sf Z}^{\ell}=S^{\ell}(\lambda_{{\sf c}})$ for $\ell=0,\dots,N$. 
Then, there exist positive constants
${\sf C}_2$ and ${\sf C}_3\ge{\sf C}_1$, independent
of $\tau$ and $h$, such that: if $\tau\,{\sf C}_3\,\lambda_{\sf c}^2<1$, then
\begin{equation}\label{mainre}
\max_{0\leq{m}\leq{\ssy N}}\|\phi^m-{\sf Z}^m\|_{1,h}\leq\,{\sf C}_2\,(\tau^2+h^2).
\end{equation}
\end{thm}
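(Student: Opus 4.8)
The plan is to carry out a discrete energy argument for the error $e^m := \phi^m - {\sf Z}^m$, comparing the consistency equations \eqref{consistency_equations0}--\eqref{consistency_equations} for $\phi^m$ against the (MFDS) relations \eqref{modscheme1a}--\eqref{modscheme1} for ${\sf Z}^m$, and to close the estimate via discrete Gronwall. The key observation making the energy structure work is that the leading dispersive operators are skew-adjoint in $(\cdot,\cdot)_{0,h}$: by \eqref{PROP_II} the term ${\rm i}\,\rho\,\Delta_h$ contributes nothing to $\mathrm{Re}(\cdot,\cdot)_{0,h}$ of the error itself, and by \eqref{PROP_III} the term $\sigma\,\partial_h\Delta_h$ is purely imaginary against $e^m$. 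The same cancellation must be engineered for the $H^1$-seminorm part, which is where one applies $\partial_h$ to the error equation and uses \eqref{PROP_commut}--\eqref{PROP_commut2} to commute $\partial_h$ past $\Delta_h$ and ${\mathcal A}_h$, so that the dispersive terms again drop out of the real part after pairing with $\partial_h e^m$.

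First I would subtract the two schemes. Since \eqref{mol00} gives $|\phi^n|^2 = |{\mathfrak m}(\lambda_{\sf c},t_n;\phi^n)|^2$, the nonlinear difference reduces to terms of the form $|\phi^n|^2 \otimes(\cdot) - |{\mathfrak m}(\lambda_{\sf c},t_n;{\sf Z}^n)|^2\otimes(\cdot)$; I would split each such term by adding and subtracting a mixed product, so that one piece involves $|\phi^n|^2 - |{\mathfrak m}(\lambda_{\sf c},t_n;{\sf Z}^n)|^2$ multiplying an exact-solution factor, and the other involves $|\phi^n|^2$ (bounded by $\lambda_{\sf c}$-type constants) multiplying the error factor. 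For the first piece I would use \eqref{G_Lip} to pass from the mollified quantity back to $|\phi^n|^2 - |{\sf Z}^n|^2$ and then \eqref{BPROP_III} to bound it by $\|e^n\|_{0,h}$ up to the uniform $L^\infty$ bounds $\lambda_\star$, $\mu_\star$; the point of the mollifier is precisely that $\|{\mathfrak m}(\lambda_{\sf c},t_n;{\sf Z}^n)\|_{1,\infty,h}<3\lambda_{\sf c}$ by \eqref{mol2}, so all coefficients of the error terms are controlled \emph{a priori}, independently of any unverified bound on ${\sf Z}^n$ itself. Pairing the differenced equation with $\tfrac{e^{n+1}+e^{n-1}}{2}$ (and its $\partial_h$-image), taking real parts, and invoking \eqref{PROP_VII} to handle the $\mathcal A_h$-nonlinearity and the Cauchy–Schwarz inequality throughout, I expect to reach a recursion of the form $\|e^{n+1}\|_{1,h}^2 - \|e^{n-1}\|_{1,h}^2 \lesssim \tau(\|e^{n+1}\|_{1,h}^2 + \|e^n\|_{1,h}^2 + \|e^{n-1}\|_{1,h}^2) + \tau\|\eta^n\|_{1,h}^2$, with the consistency bound \eqref{consistency_bound} supplying $(\tau^2+h^2)$.

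The main obstacle I anticipate is twofold. First, the three-level leapfrog structure of \eqref{modscheme1} does not directly yield a one-step energy identity, so the $H^1$-quantity I control must be a combined two-level quantity (something like $\|e^{n+1}\|_{1,h}^2 + \|e^n\|_{1,h}^2$, or a telescoping sum over $n$), and starting the induction requires separately estimating $e^1$ from the first-step scheme \eqref{modscheme1a}, where the weaker consistency \eqref{consistency_bound0} carries a factor $(h^2+\tau)$ rather than $(h^2+\tau^2)$ — I would need the extra $\tau^{1/2}$ gained in the energy sum to absorb this loss and recover the optimal $(\tau^2+h^2)$ overall. Second, the nonlinear $\partial_h$-level terms generate, via \eqref{PROD_DIFF}, discrete products of $\partial_h(|\phi^n|^2)$ against error factors; controlling these in the seminorm forces careful use of \eqref{BPROP_I} together with the uniform bounds, and it is here that the smallness condition $\tau\,{\sf C}_3\,\lambda_{\sf c}^2<1$ enters to guarantee that the implicit $\|e^{n+1}\|_{1,h}^2$ term on the right can be absorbed into the left-hand side before applying the discrete Gronwall lemma.
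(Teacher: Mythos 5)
Your proposal is correct and follows essentially the same route as the paper's own proof: the identical splitting of the nonlinear differences into a $\bigl(|\phi^n|^2-|{\mathfrak m}(\lambda_{\sf c},t_n;{\sf Z}^n)|^2\bigr)$-piece and a mollified-coefficient-times-error piece, handled via \eqref{mol2}, \eqref{G_Lip}, \eqref{BPROP_III}, \eqref{PROP_VII}, \eqref{BPROP_I} and \eqref{PROD_DIFF}, the same $L^2$/$\partial_h$-level energy identities with \eqref{PROP_II}--\eqref{PROP_III} killing the dispersive terms, two-level quantities $\|e^m\|+\|e^{m-1}\|$ for the leapfrog structure, and a discrete Gronwall closing under the smallness condition on $\tau\,\lambda_{\sf c}^2$. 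One small correction to your anticipated difficulty: the first-step loss in \eqref{consistency_bound0} is absorbed simply because $\eta^0$ enters exactly once multiplied by $\tau$, yielding $\tau\,(h^2+\tau)=\tau\,h^2+\tau^2$, so no extra $\tau^{1/2}$ from the energy sum is needed.
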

%
%
%
\begin{proof}
To simplify the notation, we set ${\sf E}^{m}:=\phi^m-{\sf Z}^m$,
${\sf D}^m:=\partial_h{\sf E}^m$ and
${\mathfrak m}_m:={\mathfrak m}(\lambda_{{\sf c}},t_m;{\sf Z}^m)$ for
$m=0,\dots,N$. Also, we set $\ell(0):=1$, $\ell(n)=n+1$  for $n=1,\dots,N-1$,
$r(0)=0$, $r(n)=n-1$ for $n=1,\dots,N-1$. We note that, since
$\lambda_{\star}<\lambda_{\sf c}$ and
$\tau\,{\sf C}_1\,\lambda_{{\sf c}}^2<1$,
Proposition~\ref{ModExist} yields the existence and uniqueness of
$({\sf Z}^m)_{m=1}^{\ssy N}$.
In the sequel,  we will use the symbol $C$ to denote a generic constant that is
independent of $\tau$ and $h$, and may changes value from one line to the other.
%
%
\par\noindent
\vskip0.2truecm
\par
{\sc Step 1}. We subtract \eqref{modscheme1a} from \eqref{consistency_equations0}
and \eqref{modscheme1} from \eqref{consistency_equations}, to obtain the following
error equations:
\begin{gather}
\tfrac{{\sf E}^1-{\sf E}^0}{\tau}
={\rm i}\,\rho\,\Delta_h\left(\tfrac{{\sf E}^1+{\sf E}^0}{2}\right)
-\sigma\,\partial_{h}\Delta_h\left(\tfrac{{\sf E}^1+{\sf E}^0}{2}\right)
+\sum_{\kappa=1}^5{\sf A}_{\kappa,0},\label{Error_Equations_a}\\
\tfrac{{\sf E}^{n+1}-{\sf E}^{n-1}}{2\,\tau}=
{\rm i}\,\rho\,\Delta_h\left(\tfrac{{\sf E}^{n+1}+{\sf E}^{n-1}}{2}\right)
-\sigma\,\partial_h\Delta_h\left(\tfrac{{\sf E}^{n+1}+{\sf E}^{n-1}}{2}\right)
+\sum_{\kappa=1}^5{\sf A}_{\kappa,n},
\quad n=1,\dots,N-1,\label{Error_Equations}
\end{gather}
where
%
%
%
\begin{equation*}
\begin{split}
{\sf A}_{1,n}:=&\,-\tfrac{3\alpha}{2}\,{\mathcal A}_h\left[\,
\left(\, |\phi^n|^2-|{\mathfrak m}_n|^2
\,\right)\otimes\partial_h\left(\phi^{\ell(n)}+\phi^{r(n)}\right)\,\right],\\
{\sf A}_{2,n}:=&\,-\tfrac{3\alpha}{2}\,{\mathcal A}_h\left[\,|{\mathfrak m}_n|^2
\otimes\partial_h\left({\sf E}^{\ell(n)}+{\sf E}^{r(n)}\right)\right],\\
{\sf A}_{3,n}:=&\,{\rm i}\,\tfrac{\delta}{2}\,
\left[\,(|\phi^{n}|^2-|{\mathfrak m}_n|^2)
\otimes\left(\phi^{\ell(n)}+\phi^{r(n)}\right)\right],\\
{\sf A}_{4,n}:=&\,{\rm i}\,\tfrac{\delta}{2}\,\left[\,|{\mathfrak m}_n|^2
\otimes\left({\sf E}^{\ell(n)}+{\sf E}^{r(n)}\right)\,\right],\\
{\sf A}_{5,n}:=&\,\eta^n.\\
\end{split}
\end{equation*}
%
%
\par\noindent
\vskip0.2truecm
\par
{\sc Step 2}. We take the inner product $(\cdot,\cdot)_{0,h}$ of
\eqref{Error_Equations_a} with $({\sf E}^1+{\sf E}^0)$
and of \eqref{Error_Equations} with $({\sf E}^{n+1}+{\sf E}^{n-1})$.
Then, we keep the real part of the obtained relation and use \eqref{PROP_II}
and \eqref{PROP_III}, to have
%
%
%
\begin{gather}
\|{\sf E}^1\|_{0,h}^2-\|{\sf E}^0\|_{0,h}^2=
\tau\,\sum_{\kappa=1}^5a_{\kappa,0},\label{kwnos1_a}\\
\|{\sf E}^{n+1}\|_{0,h}^2-\|{\sf E}^{n-1}\|_{0,h}^2=
2\,\tau\,\,\sum_{\kappa=1}^5a_{\kappa,n},
\quad n=1,\dots,N-1,\label{kwnos1}
\end{gather}
where
\begin{equation*}
a_{\kappa,n}:={\mathrm{Re}}\big({\sf A}_{\kappa,n},{\sf E}^{\ell(n)}
+{\sf E}^{r(n)}\big)_{0,h}.
\end{equation*}
\par
Let $n\in\{0,\dots,N-1\}$.  First, we observe that
\begin{gather}
a_{4,n}=0,\label{kwnos2}\\
a_{5,n}\leq\,\|\eta^n\|_{0,h}
\,\left[\,\|{\sf E}^{\ell(n)}\|_{0,h}
+\|{\sf E}^{r(n)}\|_{0,h}\,\right].\label{kwnos5}
\end{gather}
Next, we apply the Cauchy-Schwarz inequality, \eqref{PROP_V} and
\eqref{BPROP_III} to get
\begin{equation*}
\begin{split}
a_{1,n}+a_{3,n}\leq&\,
\,\left(\tfrac{3|\alpha|}{2}+\tfrac{|\delta|}{2}\right)
\,\mu_{\star}\,\left\||\phi^n|^2-|{\mathfrak m}_n|^2\right\|_{0,h}
\,\|{\sf E}^{\ell(n)}+{\sf E}^{r(n)}\|_{0,h}\\
\leq&\,C\,\mu_{\star}\,\left(|\phi^n|_{\infty,h}+|{\mathfrak m}_n|_{\infty,h}\right)
\,\|\phi^n-{\mathfrak m}_n\|_{0,h}
\,\|{\sf E}^{\ell(n)}+{\sf E}^{r(n)}\|_{0,h},\\
\end{split}
\end{equation*}
which, along with \eqref{mol2} and \eqref{G_Lip}
(with $\nu(\cdot)=\|\cdot\|_{0,h}$), yields
\begin{equation}\label{kwnos6}
\begin{split}
a_{1,n}+a_{3,n}\leq&\,C\,\mu_{\star}\,(\mu_{\star}+3\,\lambda_{\sf c})
\,\|{\sf E}^n\|_{0,h}
\,\|{\sf E}^{\ell(n)}+{\sf E}^{r(n)}\|_{0,h}\\
\leq&\,C\,\lambda_{\sf c}^2\,\|{\sf E}^n\|_{0,h}
\,\left[\,\|{\sf E}^{\ell(n)}\|_{0,h}
+\|{\sf E}^{r(n)}\|_{0,h}\,\right].\\
\end{split}
\end{equation}
Also, we use \eqref{PROP_VII}, \eqref{BPROP_I} and
\eqref{mol2} to obtain
\begin{equation}\label{kwnos4}
\begin{split}
a_{2,n}\leq&\,\tfrac{3|\alpha|}{4}\,
\big|\,\big(\partial_h(|{\mathfrak m}_n|^2),|{\sf E}^{\ell(n)}+{\sf E}^{r(n)}|^2
\big)_{0,h}\,\big|\\
\leq&\,C\,\big|\partial_h(|{\mathfrak m}_n|^2)\big|_{\infty,h}
\,\,\,\|{\sf E}^{\ell(n)}+{\sf E}^{r(n)}\|_{0,h}^2\\
\leq&\,C\,\|{\mathfrak m}_n\|_{1,\infty,h}^2
\,\left[\,\|{\sf E}^{\ell(n)}\|_{0,h}
+\|{\sf E}^{r(n)}\|_{0,h}\right]^2\\
\leq&\,C\,\lambda_{\sf c}^2\,\left[\,\|{\sf E}^{\ell(n)}\|_{0,h}
+\|{\sf E}^{r(n)}\|_{0,h}\right]^2.\\
\end{split}
\end{equation}
\par
Letting $\nu_{\ssy{\sf E}}^m:=\|{\sf E}^m\|_{0,h}+\|{\sf E}^{m-1}\|_{0,h}$
for $m=1,\dots,N$, obseving that ${\sf E}^0=0$ and combining
\eqref{kwnos1_a}, \eqref{kwnos1}, \eqref{kwnos2}, \eqref{kwnos5},
\eqref{kwnos6},  \eqref{kwnos4}, \eqref{consistency_bound0}
and \eqref{consistency_bound}, we conclude that there exist positive constants
$C_{\ssy{\sf E},1}$ and $C_{\ssy{\sf E},2}$ such that
\begin{gather}
\nu_{\ssy{\sf E}}^{1}\leq\,C_{\ssy{\sf E},1}\,\lambda_{\sf c}^2
\,\tau\,\nu_{\ssy{\sf E}}^{1}+C_1\,(\tau^2+\tau\,h^2),
\label{L2RECURSIVEa}\\
\begin{split}
\nu_{\ssy{\sf E}}^{m+1}\leq&\,\nu_{\ssy{\sf  E}}^m
+C_{\ssy{\sf E},2}\,\lambda_{\sf c}^2\,\tau\,
\,\left(\nu_{\ssy{\sf E}}^{m+1}+\nu_{\ssy{\sf E}}^m\right)
+2\,C_2\,\tau\,(\tau^2+h^2),\quad m=1,\dots,N-1.
\end{split}
\label{L2RECURSIVE}
\end{gather}
%
%
%
\par\noindent
\vskip0.2truecm
\par
{\sc Step 3}. Apply the operator $\partial_h$ on \eqref{Error_Equations_a}
and \eqref{Error_Equations}, and then use \eqref{PROP_commut}, \eqref{PROP_commut2}
and \eqref{PROD_DIFF} to get
\begin{gather}
\tfrac{{\sf D}^1-{\sf D}^0}{\tau}=
{\rm i}\,\rho\,\Delta_h\left(\tfrac{{\sf D}^1+{\sf D}^0}{2}\right)
-\sigma\,\partial_{h}\Delta_h\left(\tfrac{{\sf D}^1+{\sf D}^0}{2}\right)
+\sum_{\kappa=1}^9{\sf B}_{\kappa,0},\label{Error_Equations_Ba}\\
\tfrac{{\sf D}^{n+1}-{\sf D}^{n-1}}{2\,\tau}=
{\rm i}\,\rho\,\Delta_{h}\left(\tfrac{{\sf D}^{n+1}+{\sf D}^{n-1}}{2}\right)
-\sigma\,\partial_h\Delta_h\left(\tfrac{{\sf D}^{n+1}+{\sf D}^{n-1}}{2}\right)
+\sum_{\kappa=1}^9{\sf B}_{\kappa,n},\quad n=1,\dots,N-1,\label{Error_Equations_B}
\end{gather}
where
\begin{equation*}
\begin{split}
{\sf B}_{1,n}:=&\,-\tfrac{3\alpha}{2}\,{\mathcal
A}_h\left[\,\sigma^{-}\left(\,|\phi^{n}|^2-|{\mathfrak m}_n|^2
\,\right)\otimes\partial_h^2\left(\phi^{\ell(n)}+\phi^{r(n)}\right)\,\right],\\
{\sf B}_{2,n}:=&\,-\tfrac{3\alpha}{2}\,{\mathcal
A}_h\left[\,\partial_h\left(\,|\phi^{n}|^2-|{\mathfrak m}_n|^2\,\right)
\otimes\sigma^{+}\left(\phi^{\ell(n)}+\phi^{r(n)}\right)\,\right],\\
{\sf B}_{3,n}:=&\,-\tfrac{3\alpha}{2}\,{\mathcal A}_h
\left[\,\sigma^{-}(|{\mathfrak m}_n|^2)\otimes
\partial_h\left({\sf D}^{\ell(n)}+{\sf D}^{r(n)}\right)\,\right],\\
{\sf B}_{4,n}:=&\,-\tfrac{3\alpha}{2}\,{\mathcal A}_h\left[\,
\partial_h(|{\mathfrak m}_n|^2)
\otimes\sigma^{+}\left({\sf D}^{\ell(n)}+{\sf D}^{r(n)}\right)\,\right],\\
{\sf B}_{5,n}:=&\,{\rm i}\,\tfrac{\delta}{2}\,\left[\,
\sigma^{-}(|\phi^n|^2-|{\mathfrak m}_n|^2)
\otimes \partial_h\left(\phi^{\ell(n)}+\phi^{r(n)}\right)\,\right],\\
{\sf B}_{6,n}:=&\,{\rm i}\,\tfrac{\delta}{2}\,\left[\,
\partial_h(|\phi^n|^2-|{\mathfrak m}_n|^2)
\otimes \sigma^{+}\left(\phi^{\ell(n)}+\phi^{r(n)}\right)\,\right],\\
{\sf B}_{7,n}:=&\,{\rm i}\,\tfrac{\delta}{2}\,\left[\,
\sigma^{-}(|{\mathfrak m}_n|^2)\otimes\left({\sf D}^{\ell(n)}
+{\sf D}^{r(n)}\right)\,\right],\\
{\sf B}_{8,n}:=&\,{\rm i}\,\tfrac{\delta}{2}\,\left[\,
\partial_h(|{\mathfrak m}_n|^2)\otimes\sigma^{+}\left({\sf E}^{\ell(n)}
+{\sf E}^{r(n)}\right)\,\right],\\
{\sf B}_{9,n}:=&\,\partial_h\eta^n.
\end{split}
\end{equation*}
%
%
%
%
\par\noindent
\medskip
\par
{\sc Step 4}. First, we take the inner product
$(\cdot,\cdot)_{0,h}$
of \eqref{Error_Equations_Ba} with $({\sf D}^1+{\sf D}^0)$
and of \eqref{Error_Equations_B} with $({\sf D}^{n+1}+{\sf D}^{n-1})$.
Then, we take real parts and use the
properties \eqref{PROP_II} and \eqref{PROP_III}, to get
\begin{gather}
\|{\sf D}^1\|_{0,h}^2-\|{\sf D}^0\|_{0,h}^2=
\tau\,\sum_{\kappa=1}^9b_{\kappa,0},\label{xioni1_a}\\
\|{\sf D}^{n+1}\|_{0,h}^2-\|{\sf D}^{n-1}\|_{0,h}^2=
2\tau\,\sum_{\kappa=1}^9b_{\kappa,n},\quad n=1,\dots,N-1,\label{xioni1}
\end{gather}
where
\begin{equation*}
b_{\kappa,n}:={\mathrm{Re}}\big({\sf B}_{\kappa,n},{\sf D}^{\ell(n)}+{\sf D}^{r(n)}\big)_{0,h}.
\end{equation*}
\par
Let $n\in\{0,\dots,N-1\}$. It is obvious that
\begin{gather}
b_{7,n}=0,\label{xioni2}\\
b_{9,n}\leq\,|\eta^n|_{1,h}\,\left[\,\|{\sf D}^{\ell(n)}\|_{0,h}
+\|{\sf D}^{r(n)}\|_{0,h}\,\right].\label{xioni2a}
\end{gather}
Now, using the Cauchy-Schwarz inequality, \eqref{PROP_V}, \eqref{BPROP_I}, \eqref{NewEra3},
\eqref{mol2}, \eqref{NewEra4}, \eqref{BPROP_III}
and \eqref{G_Lip} (with $\nu=\|\cdot\|_{0,h}$), we have
\begin{equation}\label{xioni3}
\begin{split}
b_{4,n}+b_{8,n}\leq&\,\tfrac{|\delta|}{2}\,|\partial_h(|{\mathfrak m}_n|^2)|_{\infty,h}
\,\|\sigma^{+}({\sf E}^{\ell(n)}+{\sf E}^{r(n)})\|_{0,h}
\,\|{\sf D}^{\ell(n)}+{\sf D}^{r(n)}\|_{0,h}\\
&\quad+\tfrac{3|\alpha|}{2}\,|\partial_h(|{\mathfrak m}_n|^2)|_{\infty,h}
\,\|\sigma^{+}({\sf D}^{\ell(n)}+{\sf D}^{r(n)})\|_{0,h}
\,\|{\sf D}^{\ell(n)}+{\sf D}^{r(n)}\|_{0,h}\\
\leq&\,C\,\|{\mathfrak m}_n\|_{1,\infty,h}^2
\,\|{\sf E}^{\ell(n)}+{\sf E}^{r(n)}\|_{0,h}
\,\|{\sf D}^{\ell(n)}+{\sf D}^{r(n)}\|_{0,h}\\
&\quad+C\,\|{\mathfrak m}_n\|_{1,\infty,h}^2
\,\|{\sf D}^{\ell(n)}+{\sf D}^{r(n)}\|_{0,h}^2\\
\leq&\,C\,\lambda_{\sf c}^2\,\left[\,\|{\sf E}^{\ell(n)}+{\sf E}^{r(n)}\|_{0,h}
+\|{\sf D}^{\ell(n)}+{\sf D}^{r(n)}\|_{0,h}\,\right]
\,\left[\,\|{\sf D}^{\ell(n)}\|_{0,h}+\|{\sf D}^{r(n)}\|_{0,h}\,\right]\\
\end{split}
\end{equation}
and
\begin{equation}\label{xioni5}
\begin{split}
b_{1,n}+b_{5,n}\leq&\,\tfrac{3|\alpha|}{2}
\,\big|\partial_h^2\big(\phi^{\ell(n)}+\phi^{r(n)}\big)\big|_{\infty,h}
\,\|\sigma^{-}(|\phi^n|^2-|{\mathfrak m}_n|^2)\|_{0,h}
\,\|{\sf D}^{\ell(n)}+{\sf D}^{r(n)}\|_{0,h}\\
&\quad+\tfrac{|\delta|}{2}
\,\big|\partial_h\big(\phi^{\ell(n)}+\phi^{r(n)}\big)\big|_{\infty,h}
\,\|\sigma^{-}(|\phi^n|^2-|{\mathfrak m}_n|^2)\|_{0,h}
\,\|{\sf D}^{\ell(n)}+{\sf D}^{r(n)}\|_{0,h}\\
\leq&\,C\,\mu_{\star}\,\| |\phi^n|^2-|{\mathfrak m}_n|^2\|_{0,h}
\,\|{\sf D}^{\ell(n)}+{\sf D}^{r(n)}\|_{0,h}\\
\leq&\,C\,\mu_{\star}\,\left(|\phi^n|_{\infty,h}+|{\mathfrak m}_n|_{\infty,h}\right)
\,\|\phi^n-{\mathfrak m}_n\|_{0,h}
\,\|{\sf D}^{\ell(n)}+{\sf D}^{r(n)}\|_{0,h}\\
\leq&\,C\,\lambda_{\sf c}^2\,\|{\sf E}^n\|_{0,h}
\,\left[\,\|{\sf D}^{\ell(n)}\|_{0,h}+\|{\sf D}^{r(n)}\|_{0,h}\,\right].\\
\end{split}
\end{equation}
Also, using the Cauchy-Schwarz inequality,
\eqref{PROP_V}, \eqref{PROD_DIFF}, \eqref{mol2}  and \eqref{G_Lip}
(with $\nu(\cdot)=\|\cdot\|_{0,h}$ or $\nu(\cdot)=\|\cdot\|_{1,h}$),
we obtain
%
%
%
\begin{equation*}\label{xioni8}
\begin{split}
b_{2,n}+b_{6,n}\leq&\,\left(\tfrac{3\,|\alpha|}{2}+\tfrac{|\delta|}{2}\right)
\,\big|\sigma^{+}\big(\phi^{\ell(n)}+\phi^{r(n)}\big)\big|_{\infty,h}
\,\|\partial_h\big(|\phi^n|^2-|{\mathfrak m}_n|^2\big)\|_{0,h}
\,\|{\sf D}^{\ell(n)}+{\sf D}^{r(n)}\|_{0,h}\\
\leq&\,C\,\mu_{\star}\,
\,\|\partial_h\big(|\phi^n|^2-|{\mathfrak m}_n|^2\big)\|_{0,h}
\,\|{\sf D}^{\ell(n)}+{\sf D}^{r(n)}\|_{0,h}\\
\end{split}
\end{equation*}
and
\begin{equation*}\label{xioni8}
\begin{split}
\|\partial_h\big(|\phi^n|^2-|{\mathfrak m}_n|^2\big)\|_{0,h}
\leq&\,\|\partial_h\big[(\phi^n-{\mathfrak m}_n)
\otimes({\overline{\phi^n+{\mathfrak m}_n})}\big]\|_{0,h}\\
\leq&\,\|\,\partial_h(\phi^n-{\mathfrak m}_n)
\otimes{\overline{\sigma^{+}(\phi^n+{\mathfrak m}_n)}}\,\|_{0,h}
+\|\,\sigma^{-}(\phi^n-{\mathfrak m}_n)
\otimes{\overline{\partial_h(\phi^n+{\mathfrak m}_n)}}\,\|_{0,h}\\
\leq&\,\left(\|\phi^n\|_{1,\infty,h}+\|{\mathfrak m}_n\|_{1,\infty,h}\right)\,
\left[\,\|\partial_h(\phi^n-{\mathfrak m}_n)\|_{0,h}
+\|\sigma^{-}(\phi^n-{\mathfrak m}_n)\|_{0,h}\,\right]\\
\leq&\,C\,(\mu_{\star}+3\,\lambda_{\sf c})\,
\left(\,|\phi^n-{\mathfrak m}_n|_{1,h}
+\|\phi^n-{\mathfrak m}_n\|_{0,h}\,\right)\\
\leq&\,C\,\lambda_{\sf c}\,
\left(\,|{\sf E}^n|_{1,h}+\|{\sf E}^n\|_{0,h}\,\right)\\
\leq&\,C\,\lambda_{\sf c}\,
\left(\,\|{\sf D}^n\|_{0,h}+\|{\sf E}^n\|_{0,h}\,\right),\\
\end{split}
\end{equation*}
which, finally, yield
\begin{equation}\label{xioni40}
b_{2,n}+b_{6,n}\leq\,C\,\lambda_{\sf c}^2\,
\left(\,\|{\sf D}^n\|_{0,h}+\|{\sf E}^n\|_{0,h}\,\right)
\,\left[\,\|{\sf D}^{\ell(n)}\|_{0,h}+\|{\sf D}^{r(n)}\|_{0,h}\,\right].
\end{equation}
Combining \eqref{PROP_VII}, \eqref{BPROP_I} and \eqref{mol2}, we have
\begin{equation}\label{xioni9}
\begin{split}
b_{3,n}=&\,-\tfrac{3\alpha}{2}\,{\rm Re}\left({\mathcal A}_h
\big[\,(|\sigma^{-}({\mathfrak m}_n)|^2)\otimes
\partial_h\big({\sf D}^{\ell(n)}+{\sf D}^{r(n)}\big)\,\big],
{\sf D}^{\ell(n)}+{\sf D}^{r(n)}\right)_{0,h}\\
\leq&\,\tfrac{3}{4}\,|\alpha|
\,\big|\big(\partial_h(|\sigma^{-}({\mathfrak m}_n)|^2),
|{\sf D}^{\ell(n)}+{\sf D}^{r(n)}|^2\big)_{0,h}\big|\\
\leq&\,C\,\big|\partial_h(|\sigma^{-}({\mathfrak m}_n)|^2)\big|_{\infty,h}
\,\|{\sf D}^{\ell(n)}+{\sf D}^{r(n)}\|_{0,h}^2\\
\leq&\,C\,|\sigma^{-}(\partial_h(|{\mathfrak m}_n|^2))|_{\infty,h}
\,\|{\sf D}^{\ell(n)}+{\sf D}^{r(n)}\|_{0,h}^2\\
\leq&\,C\,|\partial_h(|{\mathfrak m}_n|^2)|_{\infty,h}
\,\|{\sf D}^{\ell(n)}+{\sf D}^{r(n)}\|_{0,h}^2\\
\leq&\,C\,\|{\mathfrak m}_n\|_{1,\infty,h}^2
\,\|{\sf D}^{\ell(n)}+{\sf D}^{r(n)}\|_{0,h}^2\\
\leq&\,C\,\lambda_{\sf c}^2
\,\|{\sf D}^{\ell(n)}+{\sf D}^{r(n)}\|_{0,h}
\,\left[\,\|{\sf D}^{\ell(n)}\|_{0,h}+\|{\sf D}^{r(n)}\|_{0,h}\,\right].\\
\end{split}
\end{equation}
\par
Letting ${\nu}_{\ssy{\sf D}}^m:=\|{\sf D}^m\|_{0,h}+\|{\sf D}^{m-1}\|_{0,h}$ for $m=1,\dots,N$,
observing that ${\sf E}^0={\sf D}^0=0$, and using \eqref{xioni1_a} and \eqref{xioni1}
along with \eqref{xioni2}, \eqref{xioni2a}, \eqref{xioni3}, \eqref{xioni5}, \eqref{xioni40},
\eqref{xioni9}, \eqref{consistency_bound0} and \eqref{consistency_bound},
we conclude that there exist positive constants $C_{\ssy{\sf D},1}$ and
$C_{\ssy{\sf D},2}$ such that
\begin{gather}
\nu_{\ssy{\sf D}}^1\leq\,C_{\ssy{\sf D},1}\,\lambda_{\sf c}^2\,\tau\,
\,\left(\nu_{\ssy{\sf E}}^1+\nu_{\ssy{\sf D}}^1\right)
+C_1\,(\tau^2+\tau\,h^2),\label{H1RECURSIVEa}\\
\begin{split}
\nu_{\ssy{\sf D}}^{m+1}\leq\nu_{\ssy{\sf D}}^m
&\,+C_{\ssy{\sf D},2}\,\lambda_{\sf c}^2\,\tau\,
\,\left(\nu_{\ssy{\sf E}}^{m+1}+\nu_{\ssy{\sf E}}^m
+\nu_{\ssy{\sf D}}^m+\nu_{\ssy{\sf D}}^{m+1}\right)\\
&\,+2\,C_2\,\tau\,(\tau^2+h^2),
\quad m=1,\dots,N-1.\\
\end{split}
\label{H1RECURSIVE}
\end{gather}
%
%
%
\par\noindent
\vskip0.2truecm
\par
{\sc Step 5}. Let $C_{\star}=\max\{{\sf C}_1,C_{\ssy{\sf E},1},
C_{\ssy{\sf E},2},C_{\ssy{\sf D},1},C_{\ssy{\sf D},2}\}$, and
$\nu^m:=\nu_{\ssy{\sf E}}^m+\nu_{\ssy{\sf D}}^m$ for
$m=1,\dots,N$. Assuming that $4\,\tau\,\lambda_{\sf c}^2\,C_{\star}<1$,
and using the inequalities \eqref{L2RECURSIVEa},
\eqref{L2RECURSIVE}, \eqref{H1RECURSIVEa} and \eqref{H1RECURSIVE},
we conclude that
\begin{gather}
\nu^1\leq\,4\,C_1\,(\tau^2+\tau\,h^2),\label{tzou1}\\
\nu^{m+1}\leq\,\tfrac{1+2\,C_{\star}\,\lambda_{\sf c}^2\,\tau}
{1-2\,C_{\star}\,\lambda_{\sf c}^2\,\tau}\,\nu^m
+8\,C_2\,\tau\,(\tau^2+h^2),\quad m=1,\dots,N-1.\label{tzou2}
\end{gather}
The estimate \eqref{mainre} follows easily by employing a standard
discrete Gronwall argument based on \eqref{tzou1} and \eqref{tzou2}.
\end{proof}
%
%
%
\section{Convergence of the (FDM) Approximations}\label{Section4}
%
Using the convergence result of Theorem~\ref{Conv_Of_Mod},
we are able to find a mild mesh condition, which when satisfied ensures that
the $\|\cdot\|_{1,\infty,h}-$distance of the (MFDS) approximations from the exact
solution to the continuous problem is bounded by $\lambda$, for a given value of $\lambda$.
%
%
\begin{proposition}\label{UUmod}
Let $\mu_{\star}:=\max_{0\leq{\ell}\leq{2}}
\left(\max_{\ssy{[0,1]\times[0,T]}}|\partial_x^{\ell}\phi|\right)$,
$\lambda_{{\sf c}}:=\mu_{\star}+1$,
${\sf C}_1$ be the constant specified in Proposition~\ref{ModExist}
and ${\sf C}_2,{\sf C}_3$ be the constants
specified in Proposition~\ref{Conv_Of_Mod}, where ${\sf C}_3\ge{\sf C}_1$.
Also, we assume that $\tau\,{\sf C}_3\,\lambda_{{\sf c}}^2<1$
and let $({\sf Z}^{\ell})_{\ell=0}^{\ssy N}$
be the modified finite difference approximations defined by
\eqref{modfidifin}--\eqref{modscheme1} for
$\lambda=\lambda_{{\sf c}}$, i.e.,
${\sf Z}^{\ell}=S^{\ell}(\lambda_{{\sf c}})$ for $\ell=0,\dots,N$. 
If
\begin{equation}\label{MeshCond1}
{\sf C}_2\,(\tau^2\,h^{-\frac{1}{2}}+h^{\frac{3}{2}})
\leq\,\lambda_{\sf c},
\end{equation}
then, it holds that
\begin{equation}\label{Mod_U_Estim}
\max_{0\leq{m}\leq{\ssy N}}\|{\sf Z}^m-\phi^m\|_{1,\infty,h}\leq\,\lambda_{\sf c}
\end{equation}
and
\begin{equation}\label{EttStep}
{\mathfrak m}(\lambda_{\sf c},t_m;{\sf Z}^m)={\sf Z}^m,\quad m=0,\dots,N.
\end{equation}
\end{proposition}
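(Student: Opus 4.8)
The plan is to obtain both \eqref{Mod_U_Estim} and \eqref{EttStep} as direct consequences of the discrete $H^1$-convergence estimate from Theorem~\ref{Conv_Of_Mod}, upgraded to the discrete $W^{1,\infty}$-norm by the inverse inequality \eqref{maxL2}. Since the hypothesis $\tau\,{\sf C}_3\,\lambda_{\sf c}^2<1$ is assumed and $\lambda_{\star}<\lambda_{\sf c}$, Theorem~\ref{Conv_Of_Mod} applies and yields $\max_{0\leq m\leq N}\|{\sf Z}^m-\phi^m\|_{1,h}\leq{\sf C}_2\,(\tau^2+h^2)$.

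Next I would pass from this $\|\cdot\|_{1,h}$-bound to a $\|\cdot\|_{1,\infty,h}$-bound. Writing $\|{\sf Z}^m-\phi^m\|_{1,\infty,h}=\max\{\,|{\sf Z}^m-\phi^m|_{\infty,h},\,|{\sf Z}^m-\phi^m|_{1,\infty,h}\,\}$ by definition of the discrete $W^{1,\infty}$-norm, I would apply the inverse inequality \eqref{maxL2} both to ${\sf Z}^m-\phi^m$ and to $\partial_h({\sf Z}^m-\phi^m)$. This gives $|{\sf Z}^m-\phi^m|_{\infty,h}\leq h^{-\frac{1}{2}}\,\|{\sf Z}^m-\phi^m\|_{0,h}$ and $|{\sf Z}^m-\phi^m|_{1,\infty,h}\leq h^{-\frac{1}{2}}\,\|\partial_h({\sf Z}^m-\phi^m)\|_{0,h}=h^{-\frac{1}{2}}\,|{\sf Z}^m-\phi^m|_{1,h}$. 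Since both $\|\cdot\|_{0,h}$ and $|\cdot|_{1,h}$ are dominated by $\|\cdot\|_{1,h}$, I obtain $\|{\sf Z}^m-\phi^m\|_{1,\infty,h}\leq h^{-\frac{1}{2}}\,\|{\sf Z}^m-\phi^m\|_{1,h}\leq{\sf C}_2\,h^{-\frac{1}{2}}(\tau^2+h^2)={\sf C}_2\,(\tau^2\,h^{-\frac{1}{2}}+h^{\frac{3}{2}})$. Under the mesh condition \eqref{MeshCond1} the right-hand side is at most $\lambda_{\sf c}$, and taking the maximum over $m$ establishes \eqref{Mod_U_Estim}.

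For \eqref{EttStep}, I would read \eqref{Mod_U_Estim} as the statement that ${\sf Z}^m$ lies in the closed ball ${\mathfrak B}_{\sf c}(\phi^m,\lambda_{\sf c};\|\cdot\|_{1,\infty,h})$ for every $m$. Recalling that $\phi^m=\Lambda_h(\phi(t_m,\cdot))$, this ball is precisely the one appearing in \eqref{mol00}, so the fixed-point property of the mollifier yields ${\mathfrak m}(\lambda_{\sf c},t_m;{\sf Z}^m)={\sf Z}^m$ for $m=0,\dots,N$.

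I do not expect a serious obstacle, since the proposition is essentially a corollary of Theorem~\ref{Conv_Of_Mod}. The one point requiring care is recognizing that the mesh condition \eqref{MeshCond1} is calibrated exactly so that the factor $h^{-\frac{1}{2}}$ lost through the inverse inequality, multiplied against the $O(\tau^2+h^2)$ error, leaves a quantity bounded by $\lambda_{\sf c}$; this is the step where the smallness of $\tau^4\,h^{-1}$ enters, and where a single application of \eqref{maxL2} must be seen to control both the $L^{\infty}$-part and the $W^{1,\infty}$-seminorm part of $\|\cdot\|_{1,\infty,h}$ simultaneously.
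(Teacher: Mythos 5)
Your proposal is correct and follows essentially the same route as the paper's proof: the discrete $H^1$ estimate \eqref{mainre} is upgraded to the $\|\cdot\|_{1,\infty,h}$-norm via the inverse inequality \eqref{maxL2} applied to both ${\sf Z}^m-\phi^m$ and $\partial_h({\sf Z}^m-\phi^m)$, the mesh condition \eqref{MeshCond1} absorbs the factor $h^{-\frac{1}{2}}$, and \eqref{EttStep} follows from the fixed-point property of the mollifier (your citation of \eqref{mol00} is just the lemma form of the paper's appeal to the definitions \eqref{xi_function} and \eqref{G_function}).
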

%
%
%
%
\begin{proof}
The convergence estimate \eqref{mainre}, the inverse inequality
\eqref{maxL2} and \eqref{MeshCond1} yield that
\begin{equation*}
\begin{split}
\|{\sf Z}^m-\phi^m\|_{1,\infty,h}=&\,\max\left\{|{\sf Z}^m-\phi^m|_{\infty,h},
|\partial_h({\sf Z}^m-\phi^m)|_{\infty,h}\right\}\\
\leq&\,h^{-\frac{1}{2}}\,\max\left\{\|{\sf Z}^m-\phi^m\|_{0,h},
\|\partial_h({\sf Z}^m-\phi^m)\|_{0,h}\right\}\\
\leq&\,h^{-\frac{1}{2}}\,\|{\sf Z}^m-\phi^m\|_{1,h}\\
\leq&\,{\sf C}_2\,(\tau^2\,h^{-\frac{1}{2}}+h^{\frac{3}{2}})\\
\leq&\,\lambda_{\sf c},
\end{split}
\end{equation*}
which establish \eqref{Mod_U_Estim}. Finally, \eqref{EttStep}
follows as a simple consequence of \eqref{Mod_U_Estim} and of
definitions \eqref{xi_function} and \eqref{G_function}.
\end{proof}
%
%
%
%
%
Now, we are ready to show that the  (FDM) approximations are well-defined and have
second order convergence with respect to  $\|\cdot\|_{\infty,h}-$norm.
%
%
%
\begin{thm}\label{ConvergenceRate}
Let $\mu_{\star}:=\max_{0\leq{\ell}\leq{2}}
\left(\max_{\ssy{[0,1]\times[0,T]}}|\partial_x^{\ell}\phi|\right)$,
$\lambda_{{\sf c}}:=\mu_{\star}+1$,
${\sf C}_1$ be the constant specified in Proposition~\ref{ModExist}
and ${\sf C}_2, {\sf C}_3$ be the constants
specified in Proposition~\ref{Conv_Of_Mod} where ${\sf C}_3\ge{\sf C}_1$.
Also, we assume that $\tau\,{\sf C}_3\,\lambda_{{\sf c}}^2<1$
and that \eqref{MeshCond1} holds. Then, the
finite difference method
\eqref{fidifin}-\eqref{scheme1} is well-defined
and
\begin{equation}\label{F_Estimate}
\max_{0\leq{m}\leq{\ssy
N}}\|\phi^m-\Phi^m\|_{1,h}\leq\,{\sf C}_2\,(\tau^2+h^2).
\end{equation}
\end{thm}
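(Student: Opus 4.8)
The plan is to leverage the one structural feature that separates the (MFDS) from the (FDM): by construction the mollifier ${\mathfrak m}(\lambda_{\sf c},t;\cdot)$ acts as the identity on any grid function lying within $\|\cdot\|_{1,\infty,h}$-distance $\lambda_{\sf c}$ of $\Lambda_h(\phi(t,\cdot))$. First I would invoke Proposition~\ref{UUmod}: since the hypotheses $\tau\,{\sf C}_3\,\lambda_{\sf c}^2<1$ and the mesh condition \eqref{MeshCond1} are in force, the (MFDS) approximations $({\sf Z}^m)_{m=0}^{\ssy N}$ are well-defined and obey \eqref{Mod_U_Estim} and, crucially, \eqref{EttStep}, i.e.\ ${\mathfrak m}(\lambda_{\sf c},t_m;{\sf Z}^m)={\sf Z}^m$ for every $m$. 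Feeding this identity into \eqref{modfidifin}--\eqref{modscheme1} replaces every mollified coefficient $|{\mathfrak m}(\lambda_{\sf c},t_n;{\sf Z}^n)|^2$ by $|{\sf Z}^n|^2$, so the relations defining $({\sf Z}^m)$ become \emph{identical} to the (FDM) relations \eqref{fidifin}--\eqref{scheme1}. Hence $\Phi^m:={\sf Z}^m$ solves the (FDM), which already settles existence.

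The core of the argument is then to promote this to uniqueness, so that necessarily $\Phi^m={\sf Z}^m$. I would argue by induction on the time level, using that each (FDM) step recovers $\Phi^{n+1}$ as the solution of a linear system whose matrix is exactly the operator ${\sf T}(t_n,2,\Phi^n)$ of Proposition~\ref{ModExist} but with $|\Phi^n|^2$ in place of $|{\mathfrak m}(\lambda_{\sf c},t_n;\Phi^n)|^2$ (respectively ${\sf T}(t_0,1,\Phi^0)$ at the first step). The datum $\Phi^0=\phi^0={\sf Z}^0$ is fixed; assume $\Phi^0,\dots,\Phi^n$ have been shown to coincide with ${\sf Z}^0,\dots,{\sf Z}^n$. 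By \eqref{EttStep} we then have $|\Phi^n|^2=|{\sf Z}^n|^2=|{\mathfrak m}(\lambda_{\sf c},t_n;{\sf Z}^n)|^2$, so the (FDM) matrix at level $n+1$ coincides with ${\sf T}(t_n,2,{\sf Z}^n)$, which Proposition~\ref{ModExist} shows to be invertible because $\tau\,{\sf C}_1\,\lambda_{\sf c}^2\le\tau\,{\sf C}_3\,\lambda_{\sf c}^2<1$. Consequently $\Phi^{n+1}$ is uniquely determined, and since ${\sf Z}^{n+1}$ solves the very same system we conclude $\Phi^{n+1}={\sf Z}^{n+1}$, closing the induction. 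This simultaneously proves well-posedness of the (FDM) and the identification $\Phi^m={\sf Z}^m$ for all $m$.

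With $\Phi^m={\sf Z}^m$ established, the estimate \eqref{F_Estimate} is immediate: it is nothing other than the (MFDS) convergence bound \eqref{mainre} of Theorem~\ref{Conv_Of_Mod} rewritten for $\Phi^m$.

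I expect the delicate point to be the uniqueness step rather than existence or the final estimate. The apparent danger is that the (FDM), unlike the (MFDS), carries the unmollified coefficient $|\Phi^n|^2$, so its coefficient matrix is only guaranteed invertible when $\Phi^n$ is controlled in $\|\cdot\|_{1,\infty,h}$; a spurious iterate with large norm could in principle escape the coercivity estimate \eqref{doveE1}. The resolution, and the crux of the proof, is that the matrix governing $\Phi^{n+1}$ depends only on the \emph{previous} iterate $\Phi^n$, which the induction has already pinned to the controlled quantity ${\sf Z}^n$ via \eqref{EttStep}; this one-step decoupling lets the universal invertibility guaranteed by the mollifier bound \eqref{mol2} propagate forward and rules out any alternative solution.
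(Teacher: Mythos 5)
Your proposal is correct, and its skeleton coincides with the paper's: existence of an (FDM) solution by invoking Proposition~\ref{UUmod} so that \eqref{EttStep} turns the (MFDS) relations into the (FDM) relations with $\Phi^m={\sf Z}^m$; uniqueness by induction on the time level; and \eqref{F_Estimate} read off from \eqref{mainre}. Where you genuinely diverge is in the execution of the uniqueness induction. The paper takes an arbitrary (FDM) outcome $(\Psi^m)_{m=0}^{\ssy N}$, sets ${\sf e}^m:={\sf Z}^m-\Psi^m$, observes that by the inductive hypothesis the coefficient $|\Psi^{\kappa}|^2=|{\sf Z}^{\kappa}|^2$ is common to both copies of the scheme, so ${\sf e}^{\kappa+1}$ satisfies a homogeneous linear equation (equations \eqref{canoo} and \eqref{canoo1}), and then kills ${\sf e}^{\kappa+1}$ by a fresh energy computation — inner product with ${\sf e}^{\kappa+1}$, real parts, \eqref{PROP_II}, \eqref{PROP_III}, \eqref{PROP_VII}, \eqref{BPROP_I}, and the bound $\|{\sf Z}^{\kappa}\|_{1,\infty,h}\leq 2\lambda_{\sf c}$ supplied by \eqref{Mod_U_Estim} — arriving at the explicit smallness factors $2-3\,|\alpha|\,\lambda_{\sf c}^2\,\tau$ and $1-12\,|\alpha|\,\lambda_{\sf c}^2\,\tau$. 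You instead recycle the operator framework of Proposition~\ref{ModExist}: once the induction pins $\Phi^n={\sf Z}^n$, the identity \eqref{EttStep} makes the (FDM) matrix at level $n+1$ literally equal to ${\sf T}(t_n,2,{\sf Z}^n)$ (and ${\sf T}(t_0,1,\phi^0)$ at the first step, where ${\mathfrak m}(\lambda_{\sf c},t_0;\phi^0)=\phi^0$ by \eqref{mol00}), whose invertibility was already established under $\tau\,{\sf C}_1\,\lambda_{\sf c}^2<1$ via the coercivity estimate \eqref{doveE1} and the mollifier bound \eqref{mol2}. In finite dimensions trivial kernel and invertibility are the same thing, so the two mechanisms are logically equivalent; your route buys economy (no repetition of the energy estimate, and uniqueness and solvability of each step come in one stroke), while the paper's version is self-contained at the (FDM) level and records the sharper explicit thresholds actually needed. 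You also correctly identified the crux shared by both arguments: the matrix governing $\Phi^{n+1}$ depends only on the previous iterate, which the induction has already identified with the $\|\cdot\|_{1,\infty,h}$-controlled quantity ${\sf Z}^n$, so no uncontrolled spurious branch can defeat the coercivity.
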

%
%
%
%
%
\begin{proof}
Since we have $\lambda_{\star}\leq\mu_{\star}<\lambda_{\sf c}$
and $\tau\,{\sf C}_1\,\lambda_{\sf c}^2\leq\tau\,{\sf C}_3\,\lambda_{\sf c}^2<1$,
Proposition~\ref{ModExist} yields that
 the (MFDS) approximations \eqref{modfidifin}--\eqref{modscheme1}
are well-defined when $\lambda=\lambda_{\sf c}$. Then, we simplify
the notation by setting ${\sf Z}^{\ell}:=S^{\ell}(\lambda_{{\sf c}})$ for
$\ell=0,\dots,N$.
Also, by assuming that $\tau\,{\sf C}_3\,\lambda_{\sf c}^2<1$ and \eqref{MeshCond1}
hold, Proposition~\ref{UUmod} gives that \eqref{EttStep} and \eqref{Mod_U_Estim} are
satisfied.
\par
Combining \eqref{EttStep} with \eqref{modscheme1a} and \eqref{modscheme1},
we conclude that the (MFDS) approximations $({\sf Z}^{\ell})_{\ell=0}^{\ssy N}$
are also (FDM) approximations, i.e.,
\eqref{fidifin}, \eqref{scheme11} and \eqref{scheme1} hold, after setting
$\Phi^{\ell}={\sf Z}^{\ell}$ for $\ell=0,\dots,N$.
\par
Let $(\Psi^m)_{m=0}^{\ssy N}$ be an outcome of the finite difference method
\eqref{fidifin}-\eqref{scheme1}, and set ${\sf e}^m:={\sf Z}^m-\Psi^m$ for $m=0,\dots,N$.
We will show, by induction, that ${\sf e}^m=0$ for $m=0,\dots,N$. Since the initial
value is common, we have ${\sf e}^0=0$.
Thus, using \eqref{scheme11}, we obtain
\begin{equation}\label{canoo}
2\,{\sf e}^1={\rm i}\,\rho\,\tau\,\Delta_h{\sf e}^1
-\sigma\,\tau\,\partial_h\Delta_h{\sf e}^1
-3\,\alpha\,\tau\,{\mathcal A}_h\left(|\phi^0|^2\otimes\partial_h{\sf e}^1\right)
+{\rm i}\,\delta\,\tau\,\left(|\phi^0|^2\otimes{\sf e}^1\right).
\end{equation}
First, take the $(\cdot,\cdot)_{0,h}-$inner product of \eqref{canoo}
with ${\sf e}^1$, and then, keep real parts and use \eqref{PROP_II}, \eqref{PROP_III},
\eqref{PROP_VII} and \eqref{BPROP_I}, to obtain
\begin{equation*}
\begin{split}
2\,\|{\sf e}^1\|_{0,h}^2=&\,\tfrac{3\,\alpha}{2}\,\tau\,\left(\partial_h\left(|\phi^0|^2\right),
|{\sf e}^1|^2\right)_{0,h}\\
\leq&\,3\,|\alpha|\,\tau\,|\phi^0|_{\infty,h}\,|\phi^0|_{1,\infty,h}\,\|{\sf e}^1\|_{0,h}^2\\
\leq&\,3\,|\alpha|\,\tau\,\|\phi^0\|_{1,\infty,h}^2\,\|{\sf e}^1\|_{0,h}^2\\
\leq&\,3\,|\alpha|\,\lambda_{\sf c}^2\,\tau\,\|{\sf e}^1\|_{0,h}^2,\\
\end{split}
\end{equation*} 
which yields
\begin{equation}\label{canoo3}
\|{\sf e}^1\|_{0,h}^2\,\left(2-3\,|\alpha|\,\lambda_{\sf c}^2\,\tau\right)\leq 0.
\end{equation}
Recalling from the proof of Proposition~\ref{ModExist} that ${\sf C}_1=27\,|\alpha|$, we have
\begin{equation*}
3\,|\alpha|\,\lambda_{\sf c}^2\,\tau\leq\tau\,{\sf C}_1\,\lambda_{\sf c}^2
\leq\,\tau\,{\sf C}_3\,\lambda_{\sf c}^2<1.
\end{equation*}
Thus, from \eqref{canoo3}, follows that ${\sf e}^1=0$.
Let $\kappa\in\{1,\dots,N-1\}$ and that ${\sf e}^{\ell}=0$
for $\ell=0,\dots,\kappa$. Then, using \eqref{scheme1}, we have 
\begin{equation}\label{canoo1}
{\sf e}^{\kappa+1}={\rm i}\,\rho\,\tau\,\Delta_h{\sf e}^{\kappa+1}
-\sigma\,\tau\,\partial_h\Delta_h{\sf e}^{\kappa+1}
-3\,\alpha\,\tau\,{\mathcal A}_h\left(|{\sf Z}^{\kappa}|^2
\otimes\partial_h{\sf e}^{\kappa+1}\right)
+{\rm i}\,\delta\,\tau\,\left(|{\sf Z}^{\kappa}|^2\otimes{\sf e}^{\kappa+1}\right).
\end{equation}
Taking the $(\cdot,\cdot)_{0,h}-$inner product of \eqref{canoo1} by ${\sf e}^{\kappa+1}$
and then keeping real parts and using \eqref{PROP_II}, \eqref{PROP_III},
\eqref{PROP_VII}, \eqref{BPROP_I} and \eqref{Mod_U_Estim}, we get
\begin{equation*}
\begin{split}
\|{\sf e}^{\kappa+1}\|_{0,h}^2
=&\,\tfrac{3\,\alpha}{2}\,\tau\,\left(\partial_h\left(|{\sf Z}^{\kappa}|^2\right),|{\sf e}^{\kappa+1}|^2\right)_{0,h}\\
\leq&\,3\,|\alpha|\,\tau\,\|{\sf Z}^{\kappa}\|_{1,\infty,h}^2
\,\|{\sf e}^{\kappa+1}\|_{0,h}^2\\
\leq&\,3\,|\alpha|\,\left(\|{\sf Z}^{\kappa}-\phi^{\kappa}\|_{1,\infty,h}
+\|\phi^{\kappa}\|_{1,\infty,h}\right)^2
\,\|{\sf e}^{\kappa+1}\|_{0,h}^2\\
\leq&\,12\,|\alpha|\,\lambda_{\sf c}^2
\,\|{\sf e}^{\kappa+1}\|_{0,h}^2,\\
\end{split}
\end{equation*} 
which yields
\begin{equation}\label{canoo4} 
\|{\sf e}^{\kappa+1}\|_{0,h}^2\,\left(1-12\,|\alpha|\,\lambda_{\sf c}^2\,\tau\right)\leq 0.
\end{equation}
Observing that
\begin{equation*}
12\,|\alpha|\,\lambda_{\sf c}^2\,\tau\leq\tau\,{\sf C}_1\,\lambda_{\sf c}^2
\leq\,\tau\,{\sf C}_3\,\lambda_{\sf c}^2<1,
\end{equation*}
\eqref{canoo4} yields that ${\sf e}^{\kappa+1}=0$, which closes the induction argument.
\par
Under our assumptions on $\tau$ and $h$, we have shown
that the (FDM) approximations are well-defined and, in particular,
that $\Phi^m={\sf Z}^m$ for $m=0,\dots,N$.
Thus, the error estimate \eqref{F_Estimate} follows from
the error bound \eqref{mainre} for the (MFDS) approximations.
\end{proof}
%
%
%
\section{Numerical Results}\label{Section5}
We implemented the (FDM) in a {\tt FORTRAN 90}
program named {\tt  FD}. The program {\tt  FD} uses double precision complex arithmetic and
solves, at every time-step, the resulting cyclic penta-diagonal linear system of algebraic equations
using  a direct method based on the well-known Gauss eli\-mi\-nation for banded matrices.
For graph drawing, we used the {\tt gnuplot} command-line program \cite{gnuplot}.
When the exact solution is known, we measured the approximation error in the
space-time discrete maximum norm:
\begin{equation*}
E_{\infty}(N,J):=\max_{0\leq{n}\leq{\ssy N}}\,\max_{1\leq{j}\leq{\ssy J}}\big|\Phi^n_j-\phi_j^n\big|.
\end{equation*}
Also, letting $N$ be proportional to $J$ (i.e. $N=q\,J$ for a given $q\in{\mathbb Q}$), we computed
the experimental order of convergence for successive values values $J_1$ and $J_2$ of $J$,
using the formula
\begin{equation*}
\log\left(E_{\infty}(q\,J_1,J_1)/E_{\infty}(q\,J_2,J_2)\right)/\log(J_2/J_1).
\end{equation*}
%
%
\subsection{Example 1}
We consider the problem \eqref{HirotaPeriodic}-\eqref{HirotaPeriodic_b} with:
${\sf L}=1$, $(\rho,\alpha,\sigma,\delta)=\left(2,\frac{1}{2},4,\frac{1}{4}\right)$
or $(\rho,\alpha,\sigma,\delta)=\left(0,\frac{1}{2},4,0\right)$,
and load $f$ such that the function $\phi(t,x)=e^{{\rm i}\,(t+2\,\pi\,x)}$
to be its exact solution.  In the perfomed numerical experiments we have
set $T=5$, $[x_a,x_b]=[0,1]$, $(N,J)=(5\nu,\nu)$ for  $\nu=20,40,80,160,320,640$,
and computed the approximation error $E_{\infty}(N,J)$. The results shown on Table~\ref{Table1}
confirm that the experimental order of convergence with respect to $\frac{1}{\nu}$
is equal to $2$, which is in agreement with Theorem~\ref{ConvergenceRate}.
\begin{table}[htp]
\begin{tabular}{|c|c|c|c|}
\hline
\multicolumn{3}{|c|}{$(\rho,\alpha,\sigma,\delta)=(2,0.5,4,0.25)$}\\
\hline
$\nu$    &   $E_{\infty}(5\nu,\nu)$    &   Rate \\
\hline
%
$20$   & 6.54162(-2)                        & ---  \\
\hline
$40$  & 1.61352(-2)                         & 2.019\\
\hline
$80$ & 4.01170(-3)                          & 2.008\\
\hline
$160$ & 9.91777(-4)                        & 2.016\\
\hline
$320$ & 2.39596(-4)                        & 2.049\\
\hline
$640$ & 5.63974(-5)                        & 2.087\\
\hline
\end{tabular}
\quad
\begin{tabular}{|c|c|c|c|}
\hline
\multicolumn{3}{|c|}{$(\rho,\alpha,\sigma,\delta)=(0,0.5,4,0)$}\\
\hline
$\nu$    &   $E_{\infty}(5\nu,\nu)$    &   Rate \\
\hline
%
$20$   & 6.20065(-2)                        & ---  \\
\hline
$40$  & 1.52387(-2)                         & 2.025\\
\hline
$80$ & 3.80623(-3)                          & 2.001\\
\hline
$160$ & 9.42224(-4)                        & 2.014\\
\hline
$320$ & 2.28071(-4)                        & 2.047\\
\hline
$640$ & 5.37631(-5)                        & 2.085\\
\hline
\end{tabular}
\par\noindent\vskip0.2truecm\par\noindent
\caption{Discrete maximum norm convergence for Example 1.}
\label{Table1}
\end{table}
\subsection{Example 2}\label{specialsolutions}
For $\kappa\in{\mathbb R}$, it is easily seen that
the function $\phi:[0,T]\times{\mathbb R}\rightarrow{\mathbb C}$ given by
\begin{equation}\label{Exactsol}
\phi(t,x)=e^{{\rm i}\,(\kappa\,x+\omega\,t)}\quad\forall\,
(t,x)\in[0,T]\times{\mathbb R}
\end{equation}
with
\begin{equation}\label{ExactSol_b}
\omega=\kappa^2\,(\sigma\,\kappa-\rho)+\delta-3\,\alpha\,\kappa,
\end{equation}
solves of the homogeneous (SH) equation \eqref{HirotaPeriodic}
and is ${\sf L}-$periodic on ${\mathbb R}$ when
\begin{equation}\label{ExactSol_c}
\tfrac{\kappa\,{\sf L}}{2\,\pi}\in{\mathbb Z}.
\end{equation}
According to  \cite{Karan},  the smooth function \eqref{Exactsol} under \eqref{ExactSol_b} and
\eqref{ExactSol_c} is the unique ${\sf L}-$periodic solution to the problem \eqref{HirotaPeriodic}-\eqref{HirotaPeriodic_b}
with initial condition $\phi_0(x)=e^{{\rm i}\,\kappa\,x}$. Observe that the solution does not requires 
the condition $\rho\alpha=\sigma\delta$ to be valid.
\par
We performed numerical experiments choosing  ${\sf L}=1$, $\kappa=2\,\pi$, $T=1$, 
$[x_a,x_b]=[0,1]$, $(\rho,\alpha,\sigma,\delta)=\left(0,\tfrac{1}{2},\tfrac{1}{8},0\right)$
or $(\rho,\alpha,\sigma,\delta)=\left(\tfrac{1}{8},\tfrac{1}{2},\tfrac{1}{8},1\right)$, $(N,J)=(5\nu,\nu)$
for  $\nu=80,160,320,640,1280$, and computing the approximation error $E_{\infty}(N,J)$.
The results exposed on Table~\ref{Table2} show that the experimental order of convergence
with respect to $\frac{1}{\nu}$ is equal to $2$, which is in agreement with the convergence
result of Theorem~\ref{ConvergenceRate}. 
Also, we observe that the numerical method is efficient when
$\rho\alpha\not=\sigma\delta$, which is due to
the existence of a unique smooth solution.
\begin{table}[htp]
\begin{tabular}{|c|c|c|c|}
\hline
\multicolumn{3}{|c|}{$(\rho,\alpha,\sigma,\delta)=(0,0.5,0.125,0)$}\\
\hline
$\nu$    &   $E_{\infty}(5\nu,\nu)$    &   Rate \\
\hline
%
$80$   & 2.99534(-2)                        & ---  \\
\hline
$160$  & 7.49695(-3)                         & 1.998\\
\hline
$320$ & 1.87476(-3)                          & 1.999\\
\hline
$640$ & 4.68724(-4)                        & 1.999\\
\hline
$1280$ & 1.17186(-4)                        & 1.999\\
\hline
\end{tabular}
\quad
\begin{tabular}{|c|c|c|c|}
\hline
\multicolumn{3}{|c|}{$(\rho,\alpha,\sigma,\delta)=(0.125,0.5,0.125,1)$}\\
\hline
$\nu$    &   $E_{\infty}(5\nu,\nu)$    &   Rate \\
\hline
%
$80$   & 1.79621(-2)                        & ---  \\
\hline
$160$  & 4.49193(-3)                         & 1.999\\
\hline
$320$ & 1.12307(-3)                          & 1.999\\
\hline
$640$ & 2.80772(-4)                        & 1.999\\
\hline
$1280$ & 7.01904(-5)                        & 2.000\\
\hline
\end{tabular}
\par\noindent\vskip0.2truecm\par\noindent
\caption{Discrete maximum norm convergence  for Example 2.}
\label{Table2}
\end{table}
%
\subsection{Bright Soliton solutions}  
%
A Bright Soliton (BS) solution to the (SH) equation \eqref{HirotaPeriodic}
is a function $\phi_{\star}:[0,+\infty)\times{\mathbb R}\rightarrow{\mathbb C}$ of the form
\begin{equation}\label{B_soliton}
\phi_{\star}(t,x)=A\,\,{\rm sech}(B\,(x-v\,t))\,\,e^{{\rm i}\,(\kappa\,x+\omega\,t)}
\quad\forall\,(t,x)\in[0,+\infty)\times{\mathbb R},
\end{equation}
where $A\not=0$, $B\not=0$, $\kappa$ and $\omega$ are real constants
(see, e.g. \cite{Karan}, \cite{Biswas}). It is easily seen that a (BS)
solutions exist, iff, 
\begin{equation*}
\rho\,\alpha=\sigma\,\delta,\quad
v=\sigma\,(B^2-3\,\kappa^2)+2\,\rho\,\kappa,\quad
\omega=\rho\,(B^2-\kappa^2)+\kappa\,\sigma\,(\kappa^2-3B^2),\quad
|B|=|A|\,\left(\tfrac{a}{2\sigma}\right)^{\frac{1}{2}},
\end{equation*}
where $A$, $\kappa$ are parameters.  Observing that
\begin{equation*}
|\phi_{\star}(t,x)|\leq\,2\,|A|\,e^{-|B|\,|x-v\,t|}\quad\forall\,(t,x)\in[0,+\infty)\times{\mathbb R},
\end{equation*}
and taking into account the properties of the hyperbolic function ${\tt sech}$,
we conclude that, for $t\in[0,+\infty)$, the modulus of a (BS) solution tends monotonically to $0$
when $x\rightarrow\pm\infty$. Thus $\phi_{\star}$ has no periodic structure and it can not be
an ${\sf L}-$periodic solution to the problem \eqref{HirotaPeriodic}-\eqref{HirotaPeriodic_b}.
However, using our finite difference method, we are able to simulate a (BS) solution on a set
$[0,T]\times[x_a,x_b]$ in which it has {\sl almost} compact support with respect to the space variable.
\par
We consider problem \eqref{HirotaPeriodic}-\eqref{HirotaPeriodic_b} with:
$T=5$, ${\sf L}=40$, $[x_a,x_b]=[-20,20]$, $(\rho,\alpha,\sigma,\delta)=(\tfrac{1}{2},2,1,1)$,
and initial condition $\phi_0(x)=\phi_{\star}(0,x)$, where the bright soliton parameters are
given by $A=\tfrac{1}{2}$ and $\kappa=\tfrac{1}{2}$. In the numerical experiment, we have
computed the  approximation error $E_{\infty}(N,J)$ with 
$(N,J)=(2\nu,\nu)$ for  $\nu=200,400,800,1600,3200$. The results presented on Table~\ref{Table3}
confirm, again an experimental order of convergence equal to $2$.  Finally, Figure~\ref{Graph1} shows
a good final time graph agreement of the (BS) solution along with its finite difference approximation obtained
for $(N,J)=(400,300)$.
\begin{table}[htp]
\begin{tabular}{|c|c|c|c|}
\hline
\multicolumn{3}{|c|}{$(\rho,\alpha,\sigma,\delta)=(0.5,2,1,1)$, $(A,\kappa)=(0.5,0.5)$}\\
\hline
$\nu$    &   $E_{\infty}(2\nu,\nu)$    &   Rate \\
\hline
%
$200$   & 2.75915(-2)                        & ---  \\
\hline
$400$  & 7.14955(-3)                         & 1.948\\
\hline
$800$ & 1.80531(-3)                          & 1.985\\
\hline
$1600$ & 4.53750(-4)                        & 1.992\\
\hline
$3200$ & 1.15731(-4)                        & 1.971\\
\hline
\end{tabular}
\par\noindent\vskip0.2truecm\par\noindent
\caption{ Errors and discrete maximum norm convergence rates  to a (BS) solution.}
\label{Table3}
\end{table}
\par\noindent
\begin{figure}[ht]
\centering
\includegraphics[height=4.0truecm, width=6.5truecm]{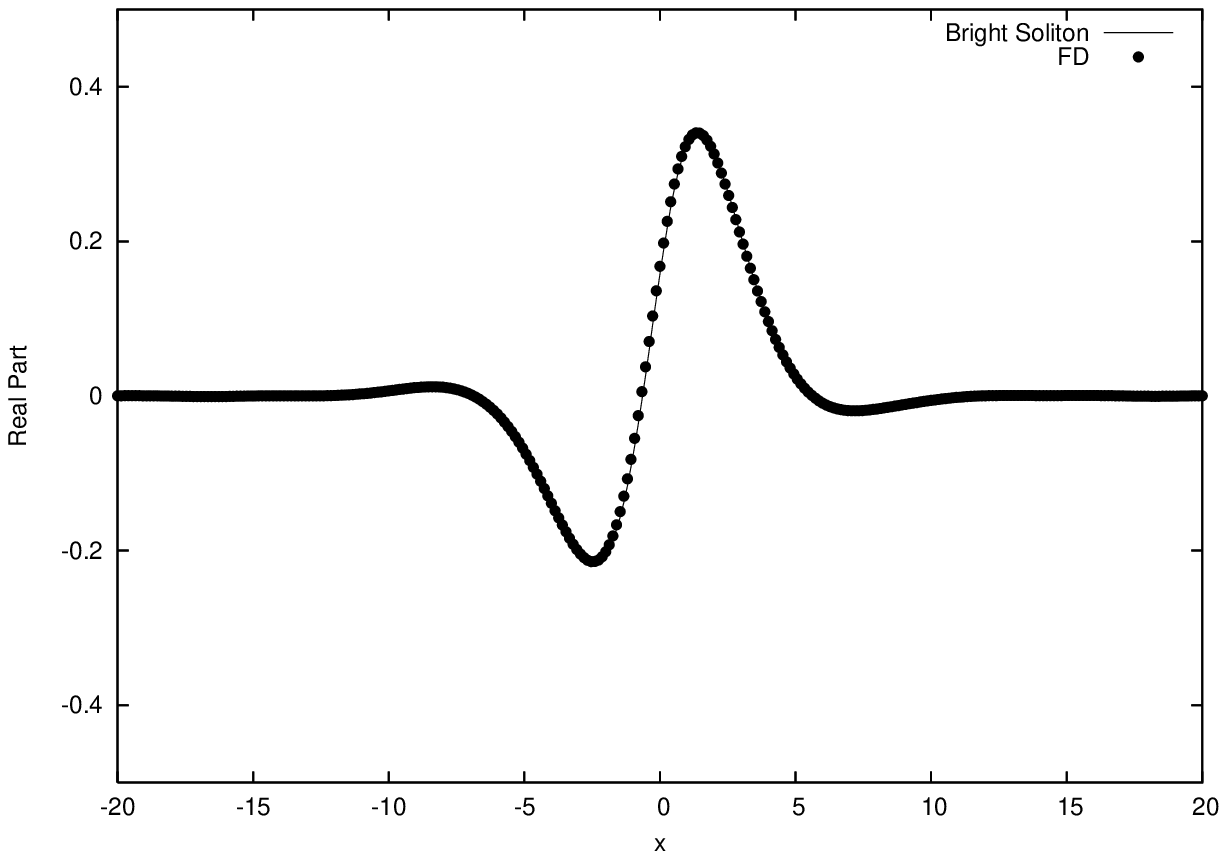}
\includegraphics[height=4.0truecm, width=6.5truecm]{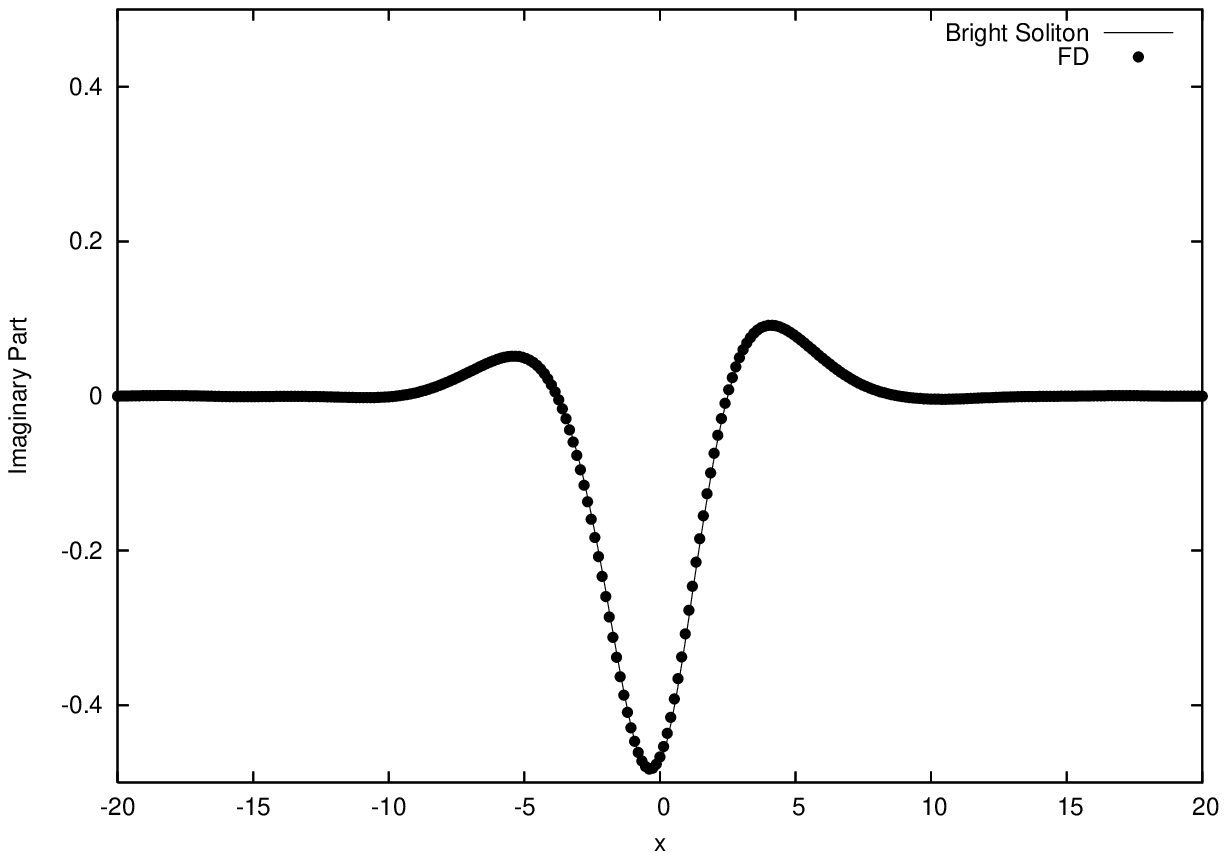}
\caption{Approximating a (BS) solution at $T=5$  with
$N=400$ and $J=300$.} \label{Graph1}
\end{figure}
%
%
%
%

\end{document}